\definecolor{rouge}{rgb}{0.7,0.00,0.00}
\definecolor{vert}{rgb}{0.00,0.5,0.00}
\definecolor{bleu}{rgb}{0.00,0.00,0.8}
\newtheorem{theorem}{Theorem}[section]
\newtheorem*{theorem*}{Theorem}
\newtheorem{lemma}[theorem]{Lemma}
\newtheorem{definition}[theorem]{Definition}
\newtheorem{proposition}[theorem]{Proposition}
\newtheorem{condition}{Condition}
\newtheorem{conditionA}{A\kern-0.1mm}
\theoremstyle{definition}
\newtheorem{remark}[theorem]{Remark}
\def \eref#1{\hbox{(\ref{#1})}}
\numberwithin{equation}{section}
\def\geq{\geqslant}
\def\leq{\leqslant}
\def\RR{\mathbb{R}}
\def\PP{\mathbb{P}}
\def\EE{\mathbb{E}}
\def\vare{{\varepsilon}}
\def \eref#1{\hbox{(\ref{#1})}}
\def\EE{\mathbb{ E}}
\begin{document}

\title[Strong averaging principle for nonautonomous slow-fast SPDEs]
{Strong averaging principle for nonautonomous slow-fast SPDEs driven by $\alpha$-stable processes}

\author{Yueling Li\quad }
\curraddr[Li, Y.]{School of Mathematics and Statistics/RIMS, Jiangsu Normal University, Xuzhou, 221116, P.R. China}
\email{lylmath@jsnu.edu.cn}

\author{Xiaobin Sun\quad }
\curraddr[Sun, X.]{School of Mathematics and Statistics/RIMS, Jiangsu Normal University, Xuzhou, 221116, P.R. China}
\email{xbsun@jsnu.edu.cn}
	
\author{\quad Zijuan Wang\quad }
\curraddr[Wang, Z.]{School of Mathematics and Statistics, Jiangsu Normal University, Xuzhou, 221116, P.R. China}
\email{zjwang@jsnu.edu.cn}
	
\author{\quad Yingchao Xie}
\curraddr[Xie, Y.]{School of Mathematics and Statistics/RIMS, Jiangsu Normal University, Xuzhou, 221116, P.R. China}
\email{ycxie@jsnu.edu.cn}

\begin{abstract}
This paper considers a class of nonautonomous slow-fast stochastic partial differential equations driven by $\alpha$-stable processes for $\alpha\in (1,2)$. By introducing the evolution system of measures, we establish an averaging principle for this
stochastic system. Specifically, we first prove the strong convergence (in the $L^p$ sense for $p\in (1,\alpha)$) of the slow component to the solution of a simplified averaged equation with coefficients depend on the scaling parameter. Furthermore, under conditions that coefficients are time-periodic or satisfy certain asymptotic convergence, we prove that the slow component converges strongly to the solution of an averaged equation, whose coefficients are independent of the scaling parameter. Finally, a concrete example is provided to illustrate the applicability of our assumptions. Notably, the absence of finite second moments in the solution caused by the $\alpha$-stable processes requires new technical treatments, thereby solving a problem mentioned in \cite[Remark 3.3]{BYY2017}.
\end{abstract}

\date{\today}
\subjclass[2000]{ Primary 60H35}
\keywords{Averaging principle; Slow-fast; Stochastic partial differential equations; Evolution system of measures; Nonautonomous; $\alpha$-stable process; Time periodic}

\maketitle

\section{Introduction}

\subsection{Background}
Let $ H $ be a given Hilbert space, with $ \langle\cdot, \cdot\rangle $ and $ |\cdot| $ denoting its inner product and norm, respectively. This paper focus on the following slow-fast nonautonomous stochastic partial differential equations (SPDEs for short) driven by cylindrical stable processes:
\begin{equation}\label{main equation 1}
 \left\{\begin{array}{l}d X_t^{\varepsilon}=\left[A X_t^{\varepsilon}+F\left(t / \varepsilon, X_t^{\varepsilon}, Y_t^{\varepsilon}\right) d t+d L_t,\quad X_0^{\varepsilon}=x \in H,\right. \\\\
 d Y_t^\varepsilon=\frac{1}{\varepsilon}\left[BY_t^{\varepsilon}+G\left(t / \varepsilon, X_t^{\varepsilon}, Y_t^{\varepsilon}\right)\right] d t+\frac{1}{\varepsilon^{1 / \alpha}} d Z_t, \quad Y_0^{\varepsilon}=y \in H,\end{array}\right.
\end{equation}
where $\{L_t\}_{t\geq 0}$ and $\{Z_t\}_{t\geq 0}$ are two mutually independent cylindrical $\alpha$-stable processes with $\alpha\in(1,2)$ defined on probability space $(\Omega, \mathscr{F},\mathbb{P})$ with the natural filtration $\{\mathscr{F}_t, t\ge 0\}$ generated by $(L_t)_{t\ge  0}$ and $(Z_t)_{t\ge0}$, that is
$$
L_t=\sum^{\infty}_{k=1}\rho_{k}L^{k}_{t}e_k,\quad Z_t=\sum^{\infty}_{k=1}\gamma_{k}Z^{k}_{t}e_k,\quad t\geq 0,
$$
where $\{\rho_k\}_{k\ge 1}$ and $\{\gamma_k\}_{k\ge 1}$ are two given sequences of positive numbers, $\{e_k\}_{k\ge 1}$ is an orthonormal basis of $H$, $\{L^k_t\}_{k\ge 1}$ and $\{Z^k_t\}_{k\ge 1}$
are two sequences of independent one dimensional symmetric $\alpha$-stable processes satisfying that for any $k\ge 1$ and $t\geq0$,
$$\mathbb{E}[e^{i L^k_{t}h}]=\mathbb{E}[e^{i Z^k_{t}h}]=e^{-t|h|^{\alpha}}, \quad \forall h\in \mathbb{R}.$$

The aforementioned stochastic system \eqref{main equation 1} involves a parameter $\varepsilon$. Intuitively, as $\varepsilon\to 0$, the components $X^\varepsilon$ and $Y^\varepsilon$ of the solution exhibit slow and fast changes, respectively. Consequently, $X^\varepsilon$ and $Y^\varepsilon$ can be regarded as the "slow" and "fast" components in the system. This kind of slow-fast system has appeared in many fields, such as the nonlinear oscillations, chemical kinetics, biology, climate dynamics, see e.g. \cite{BR2017,EE2003,PS2008,WTRY2016}. Take a consideration of the complexity of such coupled system, it is crucial to find simplified equation which can serve as approximate replacements for the original system. Thus scholars are interested in studying the asymptotic behavior of the slow component as $\vare\to 0$, which is called averaging principle. The averaging principle for slow-fast SPDEs was first studied by Cerrai and Freidlin \cite{C2009,CF2009} in 2009. Over the past fifteen years, a large number of related results has been achieved in this field, see e.g. \cite{B2012,CL2023,DSXZ2018,FLL2015,GS2024,LRSX2023,PXY2017,SG2022,WR2012,XML2015}.

It is well-known that the $\alpha$-stable process has theoretically meaningful, for instance such processes can be used to model systems with heavy tails in physics, telecommunication networks, finance and other fields. Numerous scholars are dedicated to this research field of SPDEs driven by $\alpha$-stable process, see e.g. \cite{DXZ2014,PZ2011,Xu2013}. Recently, the slow-fast SPDEs driven by $\alpha$-stable process has attracted people's attention, for instance,  Bao et al. \cite{BYY2017} proved the strong averaging principle for two-time scale SPDEs driven by $\alpha$-stable noise. The authors have proved the strong averaging principle for slow-fast stochastic Ginzburg-Landau equation and stochastic Burgers equations in \cite{SZ2020} and \cite{CSS2020}, respectively. Moreover, using the technique of Poisson equation, we also prove the strong and weak convergence rates for multi-scale stochastic differential equations (SDEs) and SPDEs driven by $\alpha$-stable processes in \cite{SXX2022} and \cite{SX2023}, respectively.

Note that aforementioned references all consider the autonomous (time-independent) SPDEs, this focus is due to the extensive theoretical work on the existence and uniqueness of invariant measure for the corresponding time-independent frozen equation. It is important to mention that the stochastic system \eref{main equation 1} is nonautonomous, that is, its coefficients will change over time due to external factors or internal variations, such as the learning models related to neuronal activity in \cite{GW2012}. Consequently, the concept of invariant measure does not exist for the corresponding time-dependent frozen equation.

To our knowledge, there are few studies on the slow-fast non-autonomous S(P)DEs. Regarding the slow-fast nonautonomous  SDEs, Wainrib \cite{W2013} and Uda \cite{U2021} proved the strong averaging principle when the coefficients of the fast equation are time-periodic. More recently, Sun et al. utilized the approach of nonautonomous Poisson equations, as presented in \cite{SWX2024}, and Khasminskii's time-discretization technique, outlined in \cite{SWX2025}, to investigate the averaging principles in both strong and weak senses. While in the framework of the slow-fast nonautonomous SPDE, there appears to be only one finding by Cerrai and Lunardi in \cite{CL2017}, where they explored its averaging principle when the coefficients in the fast equation satisfy the almost periodic condition. Nevertheless, the existing literature focus on the Wiener noise. It appears that there is a lack of research concerning the cases where the driven noises are $\alpha$-stable processes. Therefore, the primary aim of this paper is to investigate the averaging principle for stochastic system \eqref{main equation 1}.

\subsection{Outline of the methods}

The initial challenge involves comprehending the limit behaviour for the corresponding frozen equation
\begin{align*}
dY_{t}=\left[BY_{t}+G(t,x,Y_{t})\right]dt+d Z_{t},
\end{align*}
whose transition semigroup $\{P^x_{s,t}\}_{t\geq s}$ is time-inhomogeneous. Thus we need to introduce an evolution system of measures for $\{P^x_{s,t}\}_{t\geq s}$ of time-inhomogeneous frozen equation, see e.g. \cite{CL2021,DR2006,DR2008}. Specially, we call $\{\mu^x_t\}_{t\in \RR}$ is an evolution system of measures for $\{P^x_{s,t}\}_{t\geq s}$, if
$$
\int_{H}P^x_{s,t} \varphi(y)\,\mu^x_s(dy)=\int_{H}\varphi(y)\,\mu^x_t(dy),\quad
s\leq t,\varphi\in C_b(H).
$$
Therefore, it is reasonable to define the averaged coefficient by averaging the drift coefficient $F$ with respect to the evolution system of measures $\{\mu^x_t\}_{t\in \RR}$, i.e.,
\begin{equation}\label{e:drift}
\bar{F}(t,x)=\int_{H}F(t,x,y)\,\mu^x_t(dy).
\end{equation}
Thus the corresponding averaged equation can be expressed in the following form:
\begin{equation}\label{AV1}
d\bar{X}^{\varepsilon}_{t}=\left[AX^{\varepsilon}_t+\bar{F}(t/\varepsilon,\bar{X}^{\varepsilon}_t)\right]dt+dL_t,\quad\bar{X}^{\varepsilon}_{0}=x.
\end{equation}
Hence under some proper conditions, the first result of this paper is as follows:
 \begin{equation*}
\lim_{\varepsilon \rightarrow 0}\sup_{t\in [0, T]}\EE|X_{t}^{\varepsilon}-\bar{X}^{\varepsilon}_{t}|^{p}=0,\quad \forall p\in (1,\alpha), ~T>0.
\end{equation*}

It should be noted that even if we assume that $F$ is time-independent,
the time-dependent $\{\mu_t^x\}_{t\in\RR}$ still implies that $\bar{F}$ definition in \eqref{e:drift} has a time oscillating component.

Note that the coefficients in the above averaged equation \eqref{AV1} still include the parameter $\vare$. Therefore, it seems natural to further simplify \eqref{AV1} by averaging over time, while the critical step is sufficient to find a proper function $\bar{F}:H\to H$ such that
\begin{align}
\lim_{T\rightarrow\infty}\sup_{t\geq 0}\frac{1}{T}\int_{t}^{t+T}[\bar{F}(s,x)-\bar{F}(x)]ds=0.\label{e:Cb}
\end{align}
This is a ordinary condition in the studying the averaging principle for stochastic system \eqref{AV1}, as evidenced in references such as \cite[(G1)]{CL2023}. To achieve \eqref{e:Cb}, we here additionally assume that the coefficients $F$ and $G$  satisfy time periodicity or certain asymptotic convergence, respectively.

\subsubsection{Time periodic case} In accordance with the periodic conditions established in \cite{GW2012,U2021}. We suppose that the coefficients $ F(\cdot, x, y) $ and $ G(\cdot, x, y) $ are $\tau_1$-periodic and $\tau_2$-periodic, respectively, where $\tau_1/\tau_2$ is a rational number. Roughly speaking, the $\tau_2$-periodicity of $G(\cdot, x, y)$ induces that $\{\mu^x_t\}_{t\in\RR}$ is also $\tau_2$-periodic, which combines with the $\tau_1$-periodicity of $ F(\cdot, x, y) $  imply the $\tau$-periodicity of $\bar F(t,x)$ defined in \eqref{e:drift} with $\tau$ satisfies $\tau=m_2\tau_1=m_1\tau_2$, for some $m_1,m_2\in\mathbb{N}_{+}=\{1,2,\cdots\}$. In this case, the natural definition for the averaged coefficient is given by
\begin{align}
\bar{F}_P(x):=\frac{1}{\tau}\int^{\tau}_0 \bar F(t,x)dt.\label{FP}
\end{align}
It is easy to check that \eqref{e:Cb} holds for $\bar{F}(x)=\bar{F}_P(x)$. Consequently, the associated averaged equation can be expressed as
\begin{align}
d\bar{X}_t=A\bar{X}_t dt+\bar{F}_P(\bar{X}_t)dt + dL_t,\quad \bar{X}_0 = x\in H.\label{AV2}
\end{align}
Therefore, the second result in this paper is as follows:
 \begin{equation*}
\lim_{\varepsilon \rightarrow 0}\sup_{t\in [0, T]}\EE|X_{t}^{\varepsilon}-\bar{X}_{t}|^{p}=0,\quad \forall p\in (1,\alpha), ~T>0.
\end{equation*}

\subsubsection{Asymptotic convergence case}
In contrast to the periodic conditions previously imposed on the coefficients $F$ and $G$, we now introduce asymptotic convergence conditions. Specifically, there exist functions $\tilde{F},\tilde{G}:H\times H\rightarrow H$ such that for any $T\geq 0$, $x,y\in H$, the following limits hold
\begin{align*}
\lim_{T\to \infty}\sup_{t\geq 0}\frac{1}{T}\left|\int^{t+T}_t \left[F(s,x,y)-\tilde{F}(x,y)\right]ds\right|=0,\quad \lim_{T\to \infty}|G(T,x,y)-\tilde{G}(x,y)|=0.
\end{align*}
These conditions ensure that \eqref{e:Cb} is satisfied for
\begin{align}
\bar{F}(x)=\bar{F}_A(x):=\int_{H}\tilde F(x,y)\,\mu^x(dy),\label{FA}
\end{align}
where $\mu^x$ is the unique invariant measure associated with the following SPDE:
\begin{equation*}
d\tilde Y_{t}=B\tilde Y_{t} dt+\tilde{G}(x,\tilde Y_{t})\,dt+d Z_t.
\end{equation*}
As a result, the third result of this paper is as follows:
 \begin{equation*}
\lim_{\varepsilon \rightarrow 0}\sup_{t\in [0, T]}\EE|X_{t}^{\varepsilon}-X_{t}|^{p}=0,\quad \forall p\in (1,\alpha), ~T>0,
\end{equation*}
where $X$ is the solution of the averaged equation:
\begin{align}
dX_t=AX_t dt+\bar{F}_A(X_t)dt + dL_t,\quad X_0 = x\in H.\label{AV3}
\end{align}

\subsection{Contributions}

The initial contribution of this paper extends the framework of autonomous multi-scale SPDEs driven by $\alpha$-stable processes (see e.g. \cite{BYY2017,SX2023}) to the more general nonautonomous framework. It also extends nonautonomous SPDEs driven by Brownian motion (see e.g. \cite{CL2017}) to the nonautonomous SPDEs driven by $\alpha$-stable process.
Another contribution of this paper is to fill a gap as stated in \cite[Remark 3.3]{BYY2017}, that is \emph{"for the technical reason, it seems hard to show Theorem 3.1 without the uniform boundedness of the nonlinearity"}, where \emph{"Theorem 3.1"} means the strong averaging principle holds and \emph{"the nonlinearity"} means the coefficient $F$.
Although it has been mentioned about of filling this gap partially in \cite{SX2023}, using the technique of Poisson equation but some bounded conditions of second and third derivatives for the coefficients are assumed. Nevertheless, some technical treatments need to be introduced to effectively eliminate the uniform boundedness of $F$.

\vspace{2mm}
The remainder of the paper is structured as follows. In section 2, we provide detailed assumptions on the coefficients of the system under consideration. We also state our main results. In section 3, we establish a priori estimates of the solution and an auxiliary process. In section 4, we study the evolution system of measurable for the time-dependent SPDEs driven by $\alpha$-stable process. Section 5 is devoted to giving the detailed proofs of the main results. Finally, an special example is given to illustrate the effectiveness of all conditions.

Throughout this paper, we use $C$ and $C_{T}$ to represent constants whose values may vary from line to line. We use $C_{T}$ (resp. $C_{p,T}$) to emphasize that the constant depends on $T$ (resp. $p$ and $T$).

\section{Assumptions and main results}

We suppose the following assumptions:
\begin{conditionA}\label{A1}
Assume $A: \mathscr{D}(A) \subset H \rightarrow H$ and $B: \mathscr{D}(B) \subset H \rightarrow H$ are two self-adjoint linear operators on $ H $ such that the fixed basis $\{e_k\}_{k=1}^\infty \subset \mathscr{D}(A)\cap \mathscr{D}(B)$ of $H$ satisfies
$$ A e_k = -\lambda_k e_k,\quad B e_k = -\beta_k e_k ,$$
where $ \lambda_k,\beta_k > 0 $ and $ \lambda_k\uparrow \infty $, $ \beta_k\uparrow \infty $ as $ k \to \infty $.
\end{conditionA}

\begin{conditionA}\label{A2} There exists $\theta\in (0,2)$ such that
\begin{align}
\sum^{\infty}_{k=1}\frac{\rho^{\alpha}_k}{\lambda^{1-\alpha\theta/2}_k}<\infty,\quad \sum^{\infty}_{k=1}\frac{\gamma^{\alpha}_k}{\beta_k}<\infty.
\end{align}
\end{conditionA}

\begin{conditionA}\label{A3}
Assume that $H$-valued functions $ F$ on $ \RR_{+}\times H\times H $ and $ G$ on $\RR \times H \times H$ are measurable and there exist constants $C>0, L_{G}<\beta_1$ such that for any $x_i,y_i\in H$,  $i=1,2$,
\begin{align}\label{LipFG}
\begin{split}
&\sup_{t\geq 0}\left|F(t,x_1,y_1)-F(t,x_2,y_2)\right|\leq C\left(|x_1-x_2|+|y_1-y_2|\right),\\
&\sup_{t\in\RR}\left|G(t,x_1,y_2)-G(t,x_2,y_2)\right|\leq C|x_1-x_2|+L_{G}|y_1-y_2|,\\
&\sup_{t\geq 0}|F(t,x_1,y_1)|\leq C(1+|x_1|+|y_1|),\quad \sup_{t\in\RR}|G(t,x_1,y_1)|\leq C\left(1+|x_1|\right)+L_G|y_1|.
\end{split}
\end{align}
\end{conditionA}

\begin{remark}\label{Re1}
By a minor revision in the proof of \cite[Theorem 5.3]{PZ2011}, under Assumptions \ref{A1}-\ref{A3}, it is easy to prove that for any given $\varepsilon>0$ and  $x, y\in H$, the equation \eref{main equation 1} admits a unique mild solution $(X^{\varepsilon}_t, Y^{\varepsilon}_t)$, that is, $\PP$-a.s.,
\begin{equation}\left\{\begin{array}{l}\label{A mild solution}
\displaystyle
X^{\varepsilon}_t=e^{tA}x+\int^t_0e^{(t-s)A}F(s/{\varepsilon},X^{\varepsilon}_s, Y^{\varepsilon}_s)ds+\int^t_0 e^{(t-s)A}dL_s,\vspace{2mm}\\
\displaystyle
Y_t^\varepsilon=e^{tB/\varepsilon} y+\frac{1}{\varepsilon} \int_0^t e^{(t-s)B/\varepsilon} G\left(s / \varepsilon, X_s^\varepsilon, Y_s^\varepsilon\right) d s+\frac{1}{\varepsilon^{1/\alpha}} \int_0^t e^{(t-s)B/\varepsilon} d Z_s.
\end{array}\right.
\end{equation}
\end{remark}

\begin{remark}\label{Re2}
Note that the time domain of the function $G$ is the entire $\RR$ because we need to consider the evolution system of measures for the frozen equation in Section \ref{Section4}. In contrast, the time domain of the function $F$ is limited to $\RR_{+}$, which is sufficient for the analysis of the averaging principle over the interval $[0,T]$, for any $T>0$.
\end{remark}

\vspace{0.2cm}

For any $ s \in \mathbb{R} $, define
$$
H^s := \mathscr{D}\left((-A)^{s / 2}\right) := \left\{ u = \sum_{k=1}^{\infty} u_k e_k : u_k \in \mathbb{R}, \sum_{k=1}^{\infty} \lambda_k^s u_k^2 < \infty \right\},
$$
$$
(-A)^{s / 2} u := \sum_{k=1}^{\infty} \lambda_k^{s / 2} u_k e_k, \quad u \in \mathscr{D}\left((-A)^{s / 2}\right),
$$
and its corresponding norm
$$
\|u\|_s^2 := \left|(-A)^{s / 2} u\right|^2=\sum_{k=1}^{\infty} \lambda_k^s u_k^2 .
$$
It is easy to see that $ \|\cdot\|_0 = |\cdot| $. The operator semigroup $ e^{t A} $ has the following properties (see e.g. \cite[Proposition 2.4]{B2012})
\begin{align}\label{rr}
\begin{split}
& \left|e^{t A} x\right| \leq e^{-\lambda_1 t}|x|,\quad x\in H, t \geq 0 \\
& \left\|e^{t A} x\right\|_{\sigma_2} \leq C_{\sigma_1, \sigma_2} t^{-\frac{\sigma_2-\sigma_1}{2}} e^{-\frac{\lambda_1 t}{2}}\|x\|_{\sigma_1}, \quad x \in H^{\sigma_2}, \sigma_1<\sigma_2, t>0, \\
&\left|e^{t A} x-x\right| \leq C_\sigma t^{\frac{\sigma}{2}}\|x\|_\sigma, \quad x \in H^\sigma, \sigma>0, t \geq 0, \\
& \left|e^{t A} x-e^{s A} x\right| \leq C_\sigma(t-s)^\sigma s^{-\sigma}|x|, \sigma \in(0,1), 0<s \leq t, x \in H.
\end{split}
\end{align}
Similarly,  for the operator semigroup $ e^{t B} $, one has
$$|e^{t B}y|\leq e^{-\beta_1 t}|y|,\quad t\geq 0, y\in H.$$

By \cite[Lemma 4.1]{PSXZ2012}, if $\sum_{k=1}^{\infty} \rho_k^\alpha/\lambda_k^{1-\alpha \theta / 2}<\infty$ for some $\theta \geqslant 0$,  then for any $s\geq 0$ and $0<p<\alpha$, we have
\begin{align}
 \sup _{t \geqslant s} \mathbb{E}\left\|\int_s^t e^{(t-r) A} d L_r\right\|_\theta^p \leqslant C_{\alpha, p}\left(\sum_{k=1}^{\infty} \frac{\rho_k^\alpha}{\lambda_k^{1-\alpha \theta / 2}}\right)^{p/ \alpha}.\label{LAtheta}
\end{align}
And if $\sum_{k=1}^{\infty} \gamma_k^\alpha/\beta_k<\infty$,
we have that for any $s\geq 0$ and $0<p<\alpha$,
\begin{align}
 \sup _{t \geqslant s} \mathbb{E}\left|\int_s^t e^{(t-r) B} d Z_r\right|^p \leqslant C_{\alpha,p}\left(\sum_{k=1}^{\infty} \frac{\gamma_k^\alpha}{\beta_k}\right)^{p/ \alpha}.\label{ZA}
\end{align}

\vspace{0.1cm}
We present the first main result of this paper.

\begin{theorem}\label{MR1}
Suppose that Assumptions \ref{A1}-\ref{A3} hold. Then for any $p\in (1,\alpha)$ and $ T > 0 $, there exists $C_{p,T}>0$ such that all $x, y \in H $ and $\vare\in (0,1]$ we have
\begin{align}
\sup _{t \in[0, T]}\mathbb{E}\left|X_t^{\varepsilon}-\bar{X}_t^{\varepsilon}\right|^p\leq C_{p,T}(1+|x|^p+|y|^p)\varepsilon^{\frac{\theta(p-1)}{\theta(p-1)+2}}, \label{R1}
\end{align}
where $\bar{X}_t^{\varepsilon}$ is the unique solution of the averaged equation \eqref{AV1}.
\end{theorem}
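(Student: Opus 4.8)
The plan is to run a Khasminskii-type time-discretization, adapted to the $L^p$ framework with $p\in(1,\alpha)$ that the $\alpha$-stable noise forces on us. First I would subtract the two mild formulations in \eqref{A mild solution} and \eqref{AV1}. Since $X^\varepsilon$ and $\bar X^\varepsilon$ are driven by the \emph{same} realization of $L$, the stochastic convolutions $\int_0^t e^{(t-s)A}\,dL_s$ cancel exactly, leaving a pure drift difference
$$
X_t^\varepsilon-\bar X_t^\varepsilon=\int_0^t e^{(t-s)A}\big[F(s/\varepsilon,X_s^\varepsilon,Y_s^\varepsilon)-\bar F(s/\varepsilon,\bar X_s^\varepsilon)\big]\,ds
$$
with no surviving stochastic integral; this is precisely what makes an $L^p$ estimate with $p<\alpha<2$ feasible. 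Writing the integrand as $[\bar F(s/\varepsilon,X_s^\varepsilon)-\bar F(s/\varepsilon,\bar X_s^\varepsilon)]$ plus the fluctuation $[F(s/\varepsilon,X_s^\varepsilon,Y_s^\varepsilon)-\bar F(s/\varepsilon,X_s^\varepsilon)]$, the first piece is Lipschitz in the slow variable ($\bar F$ inheriting Lipschitz continuity from \eqref{LipFG} via the Lipschitz dependence of $\{\mu_t^x\}$ on $x$, to be established in Section \ref{Section4}) and feeds a Gronwall loop $\int_0^t\EE|X_s^\varepsilon-\bar X_s^\varepsilon|^p\,ds$, so the whole problem reduces to bounding the fluctuation in $L^p$.

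To control the fluctuation I would partition $[0,T]$ into intervals of length $\Delta=\Delta(\varepsilon)$ with nodes $t_k=k\Delta$, write $s(\Delta)$ for the left node of the interval containing $s$, and introduce the auxiliary fast process $\hat Y^\varepsilon$ solving the fast equation with slow input frozen at $X_{t_k}^\varepsilon$ on each $[t_k,t_{k+1})$ and the same driving $Z$. The integrand then decomposes as $\mathrm{(I)}+\mathrm{(II)}+\mathrm{(III)}$, with $\mathrm{(I)}=F(s/\varepsilon,X_s^\varepsilon,Y_s^\varepsilon)-F(s/\varepsilon,X_{s(\Delta)}^\varepsilon,\hat Y_s^\varepsilon)$, $\mathrm{(II)}=F(s/\varepsilon,X_{s(\Delta)}^\varepsilon,\hat Y_s^\varepsilon)-\bar F(s/\varepsilon,X_{s(\Delta)}^\varepsilon)$, and $\mathrm{(III)}=\bar F(s/\varepsilon,X_{s(\Delta)}^\varepsilon)-\bar F(s/\varepsilon,X_s^\varepsilon)$. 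Terms (I) and (III) are Lipschitz and reduce, via \eqref{LipFG}, to the time-regularity $\EE|X_s^\varepsilon-X_{s(\Delta)}^\varepsilon|^p$ and the tracking error $\EE|Y_s^\varepsilon-\hat Y_s^\varepsilon|^p$. For the former I would first get the clean $L^1$ bound $\EE|X_s^\varepsilon-X_{s(\Delta)}^\varepsilon|\lesssim \Delta^{\theta/2}$ (using \eqref{LAtheta} and the smoothing estimates \eqref{rr}) and then interpolate it against a bounded moment of order $<\alpha$; this Hölder interpolation upgrades it to $\EE|X_s^\varepsilon-X_{s(\Delta)}^\varepsilon|^p\lesssim \Delta^{\theta(p-1)/2}$, and is exactly where the exponent $\theta(p-1)/2$ enters. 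The tracking error is handled by the dissipativity $L_G<\beta_1$: on each interval $Y^\varepsilon-\hat Y^\varepsilon$ solves a noise-free equation, so a Gronwall argument bounds $\EE|Y_s^\varepsilon-\hat Y_s^\varepsilon|^p$ by the same $\Delta^{\theta(p-1)/2}$ up to constants.

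The main obstacle is term (II), the genuine averaging term $\EE\big|\int_0^t e^{(t-s)A}[F(s/\varepsilon,X_{s(\Delta)}^\varepsilon,\hat Y_s^\varepsilon)-\bar F(s/\varepsilon,X_{s(\Delta)}^\varepsilon)]\,ds\big|^p$. On each interval I would freeze $X_{t_k}^\varepsilon$, rescale time by $1/\varepsilon$, and invoke the exponential mixing of the frozen transition evolution $\{P^x_{s,t}\}$ toward the evolution system of measures $\{\mu_t^x\}$ (to be established in Section \ref{Section4}), yielding a conditional estimate $\big|\EE[\,\cdot\,|\mathscr F_{t_k}]\big|\lesssim \varepsilon(1+|X_{t_k}^\varepsilon|+|Y_{t_k}^\varepsilon|)$ per interval, hence an $O(\varepsilon/\Delta)$ contribution after summing the $T/\Delta$ intervals. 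The difficulty specific to the $\alpha$-stable setting is that $F$ has only linear growth and $\hat Y^\varepsilon$ has no finite second moment, so the interval contributions cannot be closed in $L^2$ as in the bounded case; instead I would split each interval integral into its $\mathscr F_{t_k}$-conditional mean and a martingale-difference remainder, control the remainder sum by a von Bahr--Esseen / martingale $L^p$ inequality valid for $1<p<2$, and pass to $L^p$ by the same interpolation against a moment of order $<\alpha$. This is the step that removes the uniform boundedness of $F$ required in \cite[Remark 3.3]{BYY2017}, and is the technical heart of the argument.

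Assembling (I)--(III) and closing the Lipschitz piece by Gronwall yields a bound of the form
$$
\sup_{t\in[0,T]}\EE\big|X_t^\varepsilon-\bar X_t^\varepsilon\big|^p\lesssim (1+|x|^p+|y|^p)\Big(\frac{\varepsilon}{\Delta}+\Delta^{\theta(p-1)/2}\Big),
$$
the averaging term (II) producing the first summand and the discretization errors the second. Optimizing over the free parameter by taking $\Delta=\varepsilon^{2/(\theta(p-1)+2)}$ balances the two contributions and produces the claimed rate $\varepsilon^{\theta(p-1)/(\theta(p-1)+2)}$. Here I expect the a priori moment bounds $\sup_t\EE|X_t^\varepsilon|^p$, $\sup_t\EE|Y_t^\varepsilon|^p$, and $\sup_t\EE|\hat Y_t^\varepsilon|^p$ (finite for $p<\alpha$, via \eqref{LAtheta}--\eqref{ZA} and the dissipativity) to be the routine inputs supplied by Section \ref{Section4}'s predecessor, while the $L^p$ control of the fluctuation (II) without second moments remains the crux on which the whole estimate rests.
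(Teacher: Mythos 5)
Your proposal shares the paper's skeleton --- cancellation of the stochastic convolution, a Gronwall loop for the Lipschitz part of $\bar F$, the Khasminskii partition with the auxiliary process $\hat Y^\varepsilon$, the identification in law of the rescaled auxiliary process with the frozen equation and the exponential mixing estimate \eqref{Wergodicity} giving $O(\varepsilon/\Delta)$, and the same final choice $\Delta=\varepsilon^{2/(\theta(p-1)+2)}$ --- but it diverges at the technical heart. The paper does \emph{not} estimate $\mathbb{E}\bigl|\int_0^t e^{(t-s)A}(\cdots)\,ds\bigr|^p$ directly; instead it introduces the smoothed Lyapunov function $U_\rho(x)=(|x|^2+\rho)^{p/2}$, shows $\partial_x U_\rho$ is $(p-1)$-H\"older continuous (see \eqref{F5.5}), and differentiates $U_\rho(Z_t^\varepsilon)$ along the drift. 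This linearizes the averaging term into a plain scalar expectation $\mathbb{E}\langle F-\bar F,\partial_x U_\rho(Z^\varepsilon_{s(\delta)})\rangle$, so conditioning on $\mathscr F_{k\delta}$ and the mixing bound apply directly with no martingale machinery; the exponent $\theta(p-1)/2$ then comes from pairing the H\"older-$(p-1)$ increment of $\partial_x U_\rho$ with the $L^p$ time-regularity of Lemma \ref{L3.9}. Your route replaces this with a conditional-mean plus martingale-difference splitting of the interval sums and a von Bahr--Esseen/Hilbert-space $L^p$ inequality for $1<p<2$. That is a legitimate alternative: the conditional means contribute $(\varepsilon/\Delta)^p$, the martingale remainder contributes $\sum_k\mathbb{E}|D_k|^p\lesssim\Delta^{p-1}$, and since $\theta<2$ both are dominated by the claimed rate under your choice of $\Delta$, so the argument closes. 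What the paper's device buys is the avoidance of any martingale inequality and a cleaner accounting of where $(p-1)$ enters; what yours buys is that it works directly at the level of the mild formulation without constructing $U_\rho$.

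One step in your write-up does not work as stated: the claim that interpolating the $L^1$ bound $\mathbb{E}|X_s^\varepsilon-X_{s(\Delta)}^\varepsilon|\lesssim\Delta^{\theta/2}$ against a moment of order $q<\alpha$ yields $\mathbb{E}|X_s^\varepsilon-X_{s(\Delta)}^\varepsilon|^p\lesssim\Delta^{\theta(p-1)/2}$. The interpolation gives exponent $\theta(q-p)/(2(q-1))$, and $(q-p)/(q-1)\geq p-1$ forces $q\geq 1/(2-p)$, which exceeds $\alpha$ whenever $p\geq 2-1/\alpha$ (in particular for $p$ near $\alpha$, since $(\alpha-1)^2>0$). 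So for the upper range of $p$ the interpolation route fails to deliver the stated exponent. The repair is simply to estimate the increments directly in $L^p$, exactly as the paper does in Lemma \ref{L3.9} using \eqref{rr}, \eqref{LAtheta} and the $H^\theta$-regularity bound: this gives $\int_0^T[\mathbb{E}|X_s^\varepsilon-X_{s(\Delta)}^\varepsilon|^p]^{1/p}\,ds\lesssim\Delta^{\theta/2}$, hence via Minkowski a contribution $\Delta^{\theta p/2}\leq\Delta^{\theta(p-1)/2}$ to your term (I), which is at least as good as what you need. With that substitution your argument is sound.
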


\begin{remark}
(i) According to \eqref{R1}, if we choose $p$ to be very close to $\alpha$ and $\theta$ to be close to $2$, we can achieve a convergence order that is close to $\frac{1}{\alpha}\left(1-\frac{1}{\alpha}\right)$. Although this is not the optimal order, it might be possible to achieve the optimal convergence rate by leveraging technique of nonautomuous Poisson equation, which is left in our future work.

(ii) Note that when $F$ and $G$ are time-independent, the conditions in Theorem \ref{MR1} only require $F$ to be Lipschitz continuous (and thus linear growth), but do not impose boundedness on $F$. Therefore, the obtained conclusion provides a good answer of an unsolved problem mentioned in \cite[Remark 3.3]{BYY2017}, that is \emph{"for the technical reason, it seems hard to show Theorem 3.1 without the uniform boundedness of the nonlinearity"}, where \emph{"Theorem 3.1"} means the strong averaging principle holds and \emph{"the nonlinearity"} means the coefficient $F$. Here we will use some technical treatments to remove the uniform boundedness of $F$ effectively.
\end{remark}

To ensure that the averaged equation \eqref{AV1} is independent of the parameter $\varepsilon$, additional conditions on the coefficients are required. Here we mainly focus on the two cases: (1) the coefficients are time periodic; (2) the coefficients possess certain asymptotic properties.

\vspace{0.2cm}
For time periodic coefficient case,  one needs the followings.

\begin{conditionA}\label{A4}
Assume that $ F(\cdot, x, y) $ and $ G(\cdot, x, y) $ are $\tau_1$-periodic and $\tau_2$-periodic, respectively, i.e.,
\begin{align}\label{p}
\begin{split}
F(t + \tau_1, x, y) = F(t, x, y),\quad 
G(t +\tau_2, x, y) = G(t, x, y),\quad\forall t\in\RR, x, y \in H,
\end{split}
\end{align}
where $\tau_1,\tau_2>0$ and $\tau_1/\tau_2$ is a rational number.
\end{conditionA}

The following is our second main result.

\begin{theorem}\label{MR2}
Suppose that Assumptions \ref{A1}-\ref{A4} hold. Then for any $p\in (1,\alpha)$ and $T>0$, there exists $C_{p,T}>0$ such that all $x, y \in H $ and $\vare\in (0,1]$ we have
\begin{align}\label{K}
\sup _{t \in[0, T]}\mathbb{E}\left|X_t^{\varepsilon}-\bar{X}_t\right|^p\leq C_{p,T}(1+|x|^p+|y|^p)\varepsilon^{\frac{\theta(p-1)}{\theta(p-1)+2}},
\end{align}
where $\bar{X}$ is the unique solution of the averaged equation \eqref{AV2}.
\end{theorem}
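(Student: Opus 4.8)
The plan is to compare $X^\varepsilon$ with the $\varepsilon$-dependent averaged process $\bar X^\varepsilon$ of \eqref{AV1}, whose distance to $X^\varepsilon$ is already controlled by Theorem \ref{MR1}, and then to compare $\bar X^\varepsilon$ with the genuinely averaged process $\bar X$ of \eqref{AV2}. Using the elementary inequality $|a-b|^p\leq 2^{p-1}(|a-c|^p+|c-b|^p)$,
\[
\sup_{t\in[0,T]}\mathbb{E}\big|X_t^\varepsilon-\bar X_t\big|^p\leq 2^{p-1}\sup_{t\in[0,T]}\mathbb{E}\big|X_t^\varepsilon-\bar X_t^\varepsilon\big|^p+2^{p-1}\sup_{t\in[0,T]}\mathbb{E}\big|\bar X_t^\varepsilon-\bar X_t\big|^p.
\]
Since Theorem \ref{MR1} bounds the first term by the claimed rate, the whole argument reduces to showing that the second term, i.e. the difference of the two averaged equations, obeys the same bound.

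Set $D_t:=\bar X_t^\varepsilon-\bar X_t$. Because \eqref{AV1} and \eqref{AV2} are driven by the \emph{same} noise $L$ from the same initial datum $x$, the stochastic convolutions cancel and $D$ solves, in mild form, $D_t=\int_0^t e^{(t-s)A}[\bar F(s/\varepsilon,\bar X_s^\varepsilon)-\bar F_P(\bar X_s)]\,ds$. I would decompose the integrand as $[\bar F(s/\varepsilon,\bar X_s^\varepsilon)-\bar F(s/\varepsilon,\bar X_s)]+[\bar F(s/\varepsilon,\bar X_s)-\bar F_P(\bar X_s)]$. Using the Lipschitz continuity of $\bar F(t,\cdot)$ (inherited from the Lipschitz bound in \eqref{LipFG} and the $x$-regularity of the evolution system of measures $\{\mu^x_t\}$ developed in Section \ref{Section4}) together with $|e^{(t-s)A}|\leq 1$ from \eqref{rr}, the first bracket contributes at most $C\int_0^t|D_s|\,ds$, which after taking $p$-th moments (Jensen against the normalized measure on $[0,t]$) and applying Gronwall's lemma gets absorbed. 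It then remains to bound the pure averaging error $R_t:=\int_0^t e^{(t-s)A}[\bar F(s/\varepsilon,\bar X_s)-\bar F_P(\bar X_s)]\,ds$ for the limit equation, so that $\sup_{t\in[0,T]}\mathbb{E}|D_t|^p\leq C_T\sup_{t\in[0,T]}\mathbb{E}|R_t|^p$.

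The structural fact I would record first is that $\tau_2$-periodicity of $G$ forces $\{\mu^x_t\}$ to be $\tau_2$-periodic, hence $\bar F(\cdot,x)$ is $\tau$-periodic with $\tau=m_2\tau_1=m_1\tau_2$, and by the definition \eqref{FP} of $\bar F_P$ the map $H(u,x):=\bar F(u,x)-\bar F_P(x)$ has zero mean over a period. Consequently its antiderivative $\Psi(v,x):=\int_0^v H(u,x)\,du$ is bounded, $|\Psi(v,x)|\leq C(1+|x|)$ uniformly in $v$ by the linear growth in \eqref{LipFG}; hence for any frozen $x$, $\big|\int_a^b[\bar F(s/\varepsilon,x)-\bar F_P(x)]\,ds\big|=\varepsilon|\Psi(b/\varepsilon,x)-\Psi(a/\varepsilon,x)|\leq C\varepsilon(1+|x|)$. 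To use this against the varying argument $\bar X_s$, I would apply Khasminskii's time-discretization: partition $[0,T]$ by $t_k=k\delta$ and replace $\bar X_s$ by $\bar X_{t_k}$ on each block, splitting $R_t$ into (a) a freezing error dominated by $C\int_0^t|\bar X_s-\bar X_{\lfloor s/\delta\rfloor\delta}|\,ds$; (b) the frozen averaging term $\sum_k e^{(t-t_k)A}\int_{t_k}^{t_{k+1}}[\bar F(s/\varepsilon,\bar X_{t_k})-\bar F_P(\bar X_{t_k})]\,ds$, which is of order $\varepsilon/\delta$ since each of the $\sim T/\delta$ blocks contributes $C\varepsilon(1+|\bar X_{t_k}|)$ by the bound on $\Psi$; and (c) a lower-order term arising from replacing $e^{(t-s)A}$ by $e^{(t-t_k)A}$, handled by the last estimate in \eqref{rr}. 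Taking $p$-th moments, invoking the a priori bound $\mathbb{E}\sup_{t\leq T}|\bar X_t|^p\leq C_T(1+|x|^p)$ from Section~3, and choosing $\delta$ to be a suitable power of $\varepsilon$ balances the two competing contributions.

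I expect the main obstacle to be step (a), the $L^p$ modulus of continuity $\mathbb{E}|\bar X_s-\bar X_{t_k}|^p$ of the averaged solution. Since the driving noise is $\alpha$-stable, only moments of order $p<\alpha$ are finite, so the standard second-moment/Burkholder machinery is unavailable and the increment must be estimated through the regularizing semigroup bounds \eqref{rr} together with the fractional-norm moment estimate \eqref{LAtheta} for the stochastic convolution $\int e^{(t-r)A}\,dL_r$ in $\|\cdot\|_\theta$. Producing this modulus of order $\delta^{\theta(p-1)/2}$, so that balancing it against the $\varepsilon/\delta$ averaging term reproduces precisely the exponent $\tfrac{\theta(p-1)}{\theta(p-1)+2}$, is the delicate point; once the periodicity structure and the Gronwall reduction are in place, the rest is routine.
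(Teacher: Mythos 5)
Your proposal is correct and follows essentially the same route as the paper: triangle inequality through $\bar X^\varepsilon$ with Theorem \ref{MR1} handling the first term, a Gronwall reduction to the pure averaging error, Khasminskii blocks of size $\delta$ with the zero-mean-over-a-period (bounded antiderivative) bound giving the $\varepsilon/\delta$ contribution, and the $\delta^{\theta/2}$ modulus of continuity of $\bar X$ controlling the freezing and semigroup-replacement errors. The only small inaccuracy is in your last paragraph: balancing $\varepsilon/\delta$ against $\delta^{\theta/2}$ yields the better rate $\varepsilon^{p\theta/(\theta+2)}$ for $\sup_{t\in[0,T]}\mathbb{E}|\bar X^\varepsilon_t-\bar X_t|^p$, and the final exponent $\tfrac{\theta(p-1)}{\theta(p-1)+2}$ is inherited from Theorem \ref{MR1} rather than from this balance.
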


\vspace{0.2cm}

For asymptotic properties on the coefficients, we need the following.

\begin{conditionA}\label{A5}
There exist $H$-valued functions $\tilde{F}$ and $\tilde{G}$ on $H\times H$, and locally bounded functions $\phi_i\ge 0$ on $ [0,\infty)$ ($i=1,2$) such that for any $T\geq 0$, $x,y\in H$,
\begin{align}\label{Cb}
\sup_{t\geq 0}\frac{1}{T}\left|\int^{t+T}_t \left[F(s,x,y)-\tilde{F}(x,y)\right]ds\right|\leq \phi_1(T)(1+|x|+|y|)
\end{align}
and
\begin{align}
|G(T,x,y)-\tilde{G}(x,y)|\leq \phi_2(T)(1+|x|+|y|),\label{Cf}
\end{align}
where $\phi_i(T)\to 0$ as $T\to \infty$, $i=1,2$.
\end{conditionA}

Thus we state our third main result.

\begin{theorem}\label{MR3}
Suppose the Assumptions \ref{A1}-\ref{A3} and \ref{A5} hold. Then for any $p\in (1,\alpha)$ and $T>0$, there exists $C_{p,T}>0$ such that all $x, y \in H $ and $\vare\in (0,1]$ we have
\begin{align}
\sup _{t \in[0, T]}\!\mathbb{E}\left|X_t^\varepsilon \!-\! X_t\right|^p\!\leq\!  C_{p,T}(1\! +\! |x|^p\!+\! |y|^p)\!\left[\varepsilon^{\frac{\theta(p-1)}{\theta(p-1)+2}}
+\left(\phi_1(\vare^{-\theta/(\theta+2)})+\tilde\phi_2(\vare^{-\theta/(\theta+2)})\right)^p\right],\label{R2}
\end{align}
where $X$ is the unique solution of the averaged equation \eqref{AV3}, and
\begin{align}
\tilde\phi_2(T):=\sup_{t\geq 0}\frac{1}{T}\int^{t+T}_t \int^s_0 e^{-\beta_1(s-u)}\phi_2(u)\,duds.\label{phi2}
\end{align}
\end{theorem}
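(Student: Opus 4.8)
The plan is to prove \eqref{R2} by inserting the intermediate process $\bar X^\varepsilon$ of \eqref{AV1} and comparing $X^\varepsilon$ with $X$ in two stages. By the triangle inequality and $|a+b|^p\le 2^{p-1}(|a|^p+|b|^p)$,
\[
\sup_{t\in[0,T]}\EE|X_t^\varepsilon - X_t|^p \le 2^{p-1}\Big(\sup_{t\in[0,T]}\EE|X_t^\varepsilon - \bar X_t^\varepsilon|^p + \sup_{t\in[0,T]}\EE|\bar X_t^\varepsilon - X_t|^p\Big).
\]
The first summand is already controlled by Theorem~\ref{MR1}, contributing exactly the factor $\varepsilon^{\theta(p-1)/(\theta(p-1)+2)}$, so the whole task reduces to estimating $\EE|\bar X_t^\varepsilon - X_t|^p$. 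Since \eqref{AV1} and \eqref{AV3} are driven by the \emph{same} noise $L$, the difference $D_t:=\bar X_t^\varepsilon-X_t$ carries no stochastic integral and satisfies the purely pathwise mild identity
\[
D_t = \int_0^t e^{(t-s)A}\big[\bar F(s/\varepsilon,\bar X_s^\varepsilon) - \bar F_A(X_s)\big]\,ds.
\]

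Next I would split the integrand as $\bar F(s/\varepsilon,\bar X_s^\varepsilon)-\bar F_A(\bar X_s^\varepsilon)$ plus $\bar F_A(\bar X_s^\varepsilon)-\bar F_A(X_s)$. The second piece is Lipschitz in $D_s$ — the Lipschitz continuity of $\bar F_A$ following from that of $\tilde F$ together with the Lipschitz dependence $x\mapsto\mu^x$ furnished by the contraction $L_G<\beta_1$ — so after applying $|\cdot|^p$, expectation and $\sup_{t\le T}$, a Gr\"onwall argument closes the estimate provided the oscillatory piece $I_t:=\int_0^t e^{(t-s)A}[\bar F(s/\varepsilon,\bar X_s^\varepsilon)-\bar F_A(\bar X_s^\varepsilon)]\,ds$ is of the claimed size. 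Writing $H(s,x):=\bar F(s,x)-\bar F_A(x)$, the heart of the matter is a quantitative version of \eqref{e:Cb}, namely $\frac1T|\int_t^{t+T}H(s,x)\,ds|\le C(\phi_1(T)+\tilde\phi_2(T))(1+|x|)$. To establish it I would use the decomposition
\[
H(s,x)=\int_H[F(s,x,y)-\tilde F(x,y)]\mu^x(dy)+\int_H[F(s,x,y)-\tilde F(x,y)](\mu^x_s-\mu^x)(dy)+\int_H\tilde F(x,y)(\mu^x_s-\mu^x)(dy).
\]
Fubini together with \eqref{Cb} and the first-moment bound $\int_H|y|\,\mu^x(dy)\le C(1+|x|)$ turns the first term into $\phi_1(T)(1+|x|)$, while the last two are reduced to controlling $\mu^x_s-\mu^x$.

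The bound on $\mu^x_s-\mu^x$ is where $\tilde\phi_2$ of \eqref{phi2} appears. I would couple, driving by the same $Z$, the time-dependent frozen equation (whose law at time $s$ is $\mu^x_s$) with the limiting autonomous frozen equation with drift $\tilde G$ (whose stationary law is $\mu^x$); subtracting the two mild forms and splitting $G(u,x,Y_u)-\tilde G(x,\tilde Y_u)$ into $[G-\tilde G](u,x,Y_u)$ and $\tilde G(x,Y_u)-\tilde G(x,\tilde Y_u)$, the first is bounded by $\phi_2(u)(1+|x|+|Y_u|)$ via \eqref{Cf} and the second by $L_G|Y_u-\tilde Y_u|$. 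Using $|e^{uB}|\le e^{-\beta_1 u}$, the gap $L_G<\beta_1$, and $\EE|Y_u|\le C(1+|x|)$, a Gr\"onwall estimate yields $W_1(\mu^x_s,\mu^x)\le C(1+|x|)\int_0^s e^{-\beta_1(s-u)}\phi_2(u)\,du$, and time-averaging reproduces precisely the quantity $\tilde\phi_2(T)$ in \eqref{phi2}.

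Finally I would homogenize $I_t$ by a time-discretization of $[0,t]$ with mesh $\delta=\varepsilon^{2/(\theta+2)}$, so the averaging window in the fast variable is $\delta/\varepsilon=\varepsilon^{-\theta/(\theta+2)}$, matching the arguments of $\phi_1,\tilde\phi_2$ in \eqref{R2}. On each block I freeze $e^{(t-s)A}$ at $e^{(t-t_k)A}$ and $\bar X_s^\varepsilon$ at $\bar X_{t_k}^\varepsilon$; the frozen leading term is dispatched by the averaging bound above, and the freezing errors are handled exactly as in the proof of Theorem~\ref{MR1} using the smoothing estimates \eqref{rr}, the $H^\theta$-regularity \eqref{LAtheta} of the stochastic convolution and the a priori moment bounds of Section~3, regenerating the order $\varepsilon^{\theta(p-1)/(\theta(p-1)+2)}$. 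The hard part, as throughout the paper, is that $F$ (hence $\bar F,\bar F_A,\tilde F$) is only of linear growth rather than bounded, so every freezing and averaging step spawns factors $(1+|\bar X_{t_k}^\varepsilon|)$ that must be absorbed using only the $L^p$-moments available for $p<\alpha<2$; keeping these growth factors under control while simultaneously coupling $\mu^x_s$ to $\mu^x$ is the delicate point, and it is exactly what removes the uniform boundedness assumption of \cite[Remark 3.3]{BYY2017}.
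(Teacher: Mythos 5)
Your proposal is correct and follows essentially the same route as the paper: the triangle inequality through $\bar X^\varepsilon$ combined with Theorem \ref{MR1}, a quantitative version of \eqref{e:Cb} obtained by coupling the time-inhomogeneous frozen equation with its autonomous limit (the paper's Lemma \ref{lem5.3} and estimate \eqref{F4.19}), and a Khasminskii-type discretization with $\delta=\varepsilon^{2/(\theta+2)}$ exactly as in the proof of Theorem \ref{MR2}. The only cosmetic deviation is that you phrase the measure comparison as a $W_1$ bound on $\mu^x_s-\mu^x$, whereas the paper inserts the finite-time marginals $Y^{t,x,\mathbf{0}}_s$ and $\tilde Y^{t,x,\mathbf{0}}_s$ and collects an extra harmless $1/T$ from the ergodicity estimates \eqref{Wergodicity} and \eqref{ergodicity1}; your stated $W_1$ bound should likewise carry such an exponentially decaying burn-in term (since \eqref{Cf} only controls $G-\tilde G$ for nonnegative times, one cannot couple from $-\infty$), but this does not affect the final rate.
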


\begin{remark}
Estimate \eqref{R2} indicates that the convergence rate of $X_t^\varepsilon$ to $X_t$ in the strong sense (as $\varepsilon$ approach $0$) is influenced by how quickly $\phi_1(T)$ and $\phi_2(T)$ approach 0 (as $T\to\infty$).
\end{remark}

\vspace{0.2cm}

\section{Some a priori estimates and auxiliary process}

In this section, we first establish a priori estimates for the solution process
$\{(X^{\varepsilon}_t,Y^{\varepsilon}_t)\}_{t\geq0}$.
Secondly, we construct an auxiliary process
$\{\hat{Y}_{t}^{\varepsilon}\}_{t\geq0}$ and estimate the difference process
$\{Y^{\varepsilon}_t-\hat{Y}_{t}^{\varepsilon}\}_{t\geq0}$.

\subsection {Some a priori estimates}

\begin{lemma}\label{L3.1}
For any $T>0$ and $p\in [1,\alpha)$, there exists a constant $C_{T}>0$ such that
\begin{align}
\sup_{\vare>0,t\in [0,T] }\left(\mathbb{E}|X_{t}^{\vare}|^{p}\right)^{1/p}\leq C_{T}(1+|x|+|y|)\label{EX}
\end{align}
and
\begin{align}
\sup_{\vare>0,t\in [0,T]}\left(\mathbb{E}|Y_{t}^{\varepsilon}|^{p}\right)^{1/p}\leq C_{T}(1+|x|+|y|).\label{EY}
\end{align}
\end{lemma}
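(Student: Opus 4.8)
The plan is to run a Gronwall argument directly on the mild formulation \eqref{A mild solution}, using the $L^p$ convolution bounds \eqref{LAtheta}--\eqref{ZA}, the dissipativity estimates $|e^{tA}x|\le e^{-\lambda_1 t}|x|$ and $|e^{tB}y|\le e^{-\beta_1 t}|y|$ from \eqref{rr}, and the linear growth bounds in \eqref{LipFG}. Throughout I write $\|\cdot\|_p:=(\EE|\cdot|^p)^{1/p}$, which is a genuine norm since $p\geq 1$, so Minkowski's integral inequality lets me pull $\|\cdot\|_p$ inside the drift integrals. Because $X^\vare$ and $Y^\vare$ are coupled, the strategy is first to bound $\|Y^\vare_t\|_p$ in terms of $M_X(t):=\sup_{s\in[0,t]}\|X^\vare_s\|_p$, and then to close a Gronwall estimate for $X^\vare$ alone. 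To keep these suprema finite a priori, I would localize with the stopping times $\tau_N=\inf\{t\ge 0:|X^\vare_t|\vee|Y^\vare_t|>N\}$, derive all bounds on $[0,t\wedge\tau_N]$ with constants independent of $N$, and let $N\to\infty$ by Fatou's lemma; below I suppress this localization.

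For the fast component the only nonstandard term is the stochastic convolution $\vare^{-1/\alpha}\int_0^t e^{(t-s)B/\vare}\,dZ_s$. Using the $1/\alpha$-self-similarity of the cylindrical $\alpha$-stable process, i.e.\ $\{\vare^{-1/\alpha}Z_{\vare r}\}_{r\ge0}\stackrel{d}{=}\{Z_r\}_{r\ge0}$, the substitution $s=\vare r$ yields the distributional identity
\[
\frac{1}{\vare^{1/\alpha}}\int_0^t e^{(t-s)B/\vare}\,dZ_s\stackrel{d}{=}\int_0^{t/\vare}e^{(t/\vare-r)B}\,d\tilde Z_r,\qquad \tilde Z\stackrel{d}{=}Z,
\]
so \eqref{ZA} bounds its $p$-th moment by a constant independent of $\vare$ and $t$. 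Combining this with $|e^{(t-s)B/\vare}|\le e^{-\beta_1(t-s)/\vare}$, the growth bound $|G|\le C(1+|x|)+L_G|y|$ from \eqref{LipFG}, and $\frac{1}{\vare}\int_0^t e^{-\beta_1(t-s)/\vare}\,ds\le\beta_1^{-1}$, I obtain, with $M_Y(t):=\sup_{s\in[0,t]}\|Y^\vare_s\|_p$,
\[
\|Y^\vare_t\|_p\le|y|+\frac{C}{\beta_1}\bigl(1+M_X(t)\bigr)+\frac{L_G}{\beta_1}M_Y(t)+C .
\]
Taking the supremum over $[0,t]$ and using the structural assumption $L_G<\beta_1$ to absorb the last term on the left gives the key bound $M_Y(t)\le C\bigl(1+|y|+M_X(t)\bigr)$.

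For the slow component, \eqref{A mild solution} together with $|e^{(t-s)A}|\le e^{-\lambda_1(t-s)}$, the growth bound $|F|\le C(1+|x|+|y|)$ from \eqref{LipFG}, the convolution bound \eqref{LAtheta} taken in $H^0=H$ (finite by \ref{A2}, since $\sum_k\rho_k^\alpha/\lambda_k\le\lambda_1^{-\alpha\theta/2}\sum_k\rho_k^\alpha/\lambda_k^{1-\alpha\theta/2}<\infty$), and the bound on $M_Y$ just obtained, yield
\[
\|X^\vare_t\|_p\le C\bigl(1+|x|+|y|\bigr)+C\int_0^t e^{-\lambda_1(t-s)}\bigl(\|X^\vare_s\|_p+\|Y^\vare_s\|_p\bigr)\,ds\le C\bigl(1+|x|+|y|\bigr)+C\int_0^t M_X(s)\,ds .
\]
Passing to the supremum and applying Gronwall's inequality produces $M_X(t)\le C_T(1+|x|+|y|)$ on $[0,T]$, which is \eqref{EX}; substituting back into $M_Y(t)\le C(1+|y|+M_X(t))$ gives \eqref{EY}. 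The main obstacle, and the point where the hypotheses are genuinely used, lies in the fast equation: the $\vare^{-1/\alpha}$ prefactor must be neutralized by self-similarity rather than by an isometry (unavailable since $p<\alpha<2$ means second moments are infinite), and the linear term in $|Y^\vare|$ can only be absorbed because $L_G<\beta_1$; everything else is a routine Gronwall closure.
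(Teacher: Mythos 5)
Your proposal is correct and follows essentially the same route as the paper: the self-similarity identity $\vare^{-1/\alpha}\int_0^t e^{(t-s)B/\vare}dZ_s\stackrel{d}{=}\int_0^{t/\vare}e^{(t/\vare-r)B}d\tilde Z_r$ combined with \eqref{ZA}, a Minkowski bound on the fast component whose $L_G|y|$ term is absorbed using $L_G<\beta_1$ to get $\sup_{s\le t}\|Y^\vare_s\|_p\le C(1+|y|)+C\sup_{s\le t}\|X^\vare_s\|_p$, and then a Gronwall closure for $X^\vare$. The only cosmetic difference is your explicit stopping-time localization, which the paper leaves implicit.
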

\begin{proof}
Define $\tilde Z_t:=\frac{1}{\vare^{1/ \alpha}}Z_{t\vare}$, which is also a cylindrical $\alpha$-stable process. Then by \eqref{ZA}, for any $p\in [1,\alpha)$,
\begin{align*}
\EE\left|\frac{1}{\vare^{1/\alpha}}\int^t_0 e^{(t-s)B/\varepsilon}dZ_s\right|^p=\EE\left|\int^{t/\vare}_0 e^{(t/\vare-s)B}d\tilde Z_s\right|^{p}
\leq C\left(\sum^{\infty}_{k=1}\frac{\gamma^{\alpha}_k }{\beta_k} \right)^{p/\alpha}<\infty.
\end{align*}

Recall that
\begin{align*}
Y_t^\varepsilon=e^{tB/\varepsilon} y+\frac{1}{\varepsilon} \int_0^t e^{(t-s)B/\varepsilon} G\left(s / \varepsilon, X_s^\varepsilon, Y_s^\varepsilon\right) d s+\frac{1}{\varepsilon^{1/\alpha}} \int_0^t e^{(t-s)B/\varepsilon} d Z_s
\end{align*}
Then by Minkowski's inequality, we get that for any $0<t\leq T$,
\begin{align*}
\left(\EE|Y^{\varepsilon}_t|^p\right)^{1/p}
\leq&|e^{tB/\vare}y|+\left[\EE\left(\frac{1}{\vare}\int^t_0 \left|e^{\frac{(t-s)B}{\vare}}G(s/\vare,X^{\vare}_s,Y^{\vare}_s)\right|ds\right)^p\right]^{1/p}\nonumber\\
&+\left[\EE\left|\frac{1}{\vare^{1/\alpha}}\int^t_0 e^{(t-s)B/\varepsilon}dZ_s\right|^p\right]^{1/p}\\
\leq&|y|\!+\!\frac{1}{\vare}\int^t_0 e^{\frac{-\beta_1(t-s)}{\vare}}\left[C\!+\!C\left(\EE|X^{\vare}_s|^p\right)^{1/p}\!+\!L_G\left(\EE|Y^{\vare}_s|^p\right)^{1/p}\right]\!+\!C\left(\sum^{\infty}_{k=1}\frac{\gamma^{\alpha}_k }{\beta_k} \right)^{1/\alpha}\\
\leq&C(1+|y|)+\frac{L_{G}}{\beta_1}\sup_{0\leq t\leq T}\left(\EE|Y^{\varepsilon}_t|^p\right)^{1/p}+C\sup_{t\in [0,T]}\left(\mathbb{E}|X_{t}^{\varepsilon} |^p\right)^{1/p}.
\end{align*}
Thus by $L_G<\beta_1$, it follows
\begin{align}
\sup_{t\in [0,T]}\left(\EE|Y^{\varepsilon}_t|^p\right)^{1/p}
\leq C(1+|y|)+C\sup_{t\in [0,T]}\left(\mathbb{E}|X_{t}^{\varepsilon} |^p\right)^{1/p}.\label{F3.3}
\end{align}

Then \eqref{LipFG}, \eqref{rr}, \eqref{LAtheta} and \eqref{F3.3} imply that for any $p\in [1,\alpha)$ and $t\in [0,T]$,
\begin{align*}
\sup_{t\in [0,T]}\left(\mathbb{E}|X_{t}^{\varepsilon} |^p\right)^{1/p}\leq& C|x|\!+\!C\int^T_0 \!\left(\mathbb{E}|X_{t}^{\varepsilon} |^p\right)^{1/p} dt\!+\!C\int^T_0\!\left(\EE|Y^{\varepsilon}_t|^p\right)^{1/p}dt\\
&+\!C\sup_{t\in [0,T]}\left [\mathbb{E}\left|\int^t_0 e^{(t-s)A}dL_s\right|^p\right]^{1/p}\\
\leq &C(1+|x|+|y|)+C\int^T_0 \left(\mathbb{E}|X_{t}^{\varepsilon}|^p\right)^{1/p} dt.
\end{align*}
Then by Gronwall's inequality, we have
\begin{eqnarray*}
\sup_{t\in [0,T]}\left(\mathbb{E}|X_{t}^{\varepsilon} |^p\right)^{1/p}\leq C_{T}(1+|x|+|y|).
\end{eqnarray*}
This together with \eref{F3.3} imply \eref{EY}. The proof is complete.
\end{proof}

The following estimate plays an important role in our proof of main results.
\begin{lemma} \label{L3.9}
For any $T>0$, there exists a constant $C_{T}>0$ such that for any $p\in [1,\alpha)$ and $\delta\in (0,1]$,
\begin{align}
\sup_{\vare>0}\int^{T}_0\left[\mathbb{E}|X^{\varepsilon}_t-X^{\varepsilon}_{t(\delta)}|^p\right]^{1/p}dt\leq C_T\delta^{\frac{\theta }{2}}(1+|x|+|y|),\label{COX}
\end{align}
where $\theta$ is the constant in Assumption \ref{A2} and $t(\delta):=\lfloor t/\delta\rfloor\delta$,  $\lfloor t/\delta\rfloor$ is the integer part of $t/\delta$.
\end{lemma}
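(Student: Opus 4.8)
The plan is to work from the mild formulation \eqref{A mild solution} and expand $X^\varepsilon_t-X^\varepsilon_{t(\delta)}$ around the base point $r:=t(\delta)$. Restarting the mild solution at time $r$ gives, for $t\in[r,r+\delta)$,
\[
X^\varepsilon_t-X^\varepsilon_r=(e^{(t-r)A}-I)X^\varepsilon_r+\int_r^t e^{(t-s)A}F(s/\varepsilon,X^\varepsilon_s,Y^\varepsilon_s)\,ds+\int_r^t e^{(t-s)A}\,dL_s,
\]
and inserting the mild expression of $X^\varepsilon_r$ itself turns this into a sum of five terms: the semigroup increment on the datum $(e^{tA}-e^{rA})x$; the drift "new mass" $\int_r^t e^{(t-s)A}F\,ds$ and drift "old mass" $(e^{(t-r)A}-I)\int_0^r e^{(r-s)A}F\,ds$; and the two analogous stochastic-convolution terms $\int_r^t e^{(t-s)A}dL_s$ and $(e^{(t-r)A}-I)\int_0^r e^{(r-s)A}dL_s$. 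By Minkowski's inequality it is enough to bound $\int_0^T(\mathbb{E}|\cdot|^p)^{1/p}\,dt$ for each term by $C_T\delta^{\theta/2}(1+|x|+|y|)$. Throughout I use $0\le t-r<\delta\le1$ and $\theta/2<1$, so in particular $\delta\le\delta^{\theta/2}$.

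The deterministic and drift pieces combine the moment bounds of Lemma \ref{L3.1} with the smoothing estimates \eqref{rr}. For the drift "new mass" term the contraction $|e^{(t-s)A}\cdot|\le|\cdot|$ and the linear growth in \eqref{LipFG} give, via Lemma \ref{L3.1}, a contribution of order $(t-r)\le\delta\le\delta^{\theta/2}$. For the drift "old mass" term I first note that $W_r:=\int_0^r e^{(r-s)A}F\,ds$ lies in $H^\theta$: the second line of \eqref{rr} yields $(\mathbb{E}\|W_r\|_\theta^p)^{1/p}\le C\int_0^r (r-s)^{-\theta/2}e^{-\lambda_1(r-s)/2}(\mathbb{E}|F|^p)^{1/p}\,ds\le C_T(1+|x|+|y|)$, the kernel being integrable because $\theta/2<1$; then the third line of \eqref{rr}, $|(e^{(t-r)A}-I)W_r|\le C(t-r)^{\theta/2}\|W_r\|_\theta$, produces the factor $\delta^{\theta/2}$. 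The semigroup increment $(e^{tA}-e^{rA})x=(e^{(t-r)A}-I)e^{rA}x$ is the only term with a singular weight: for $r>0$ the smoothing of $e^{rA}$ gives $\|e^{rA}x\|_\theta\le C r^{-\theta/2}|x|$, whence $|(e^{tA}-e^{rA})x|\le C\delta^{\theta/2}r^{-\theta/2}|x|$, and $\sum_{k\ge1}\int_{k\delta}^{(k+1)\delta}(k\delta)^{-\theta/2}\,dt$ converges to $C_T$ exactly because $\theta/2<1$; the boundary block $t\in[0,\delta)$ (where $r=0$) is handled crudely by $|e^{tA}x-x|\le 2|x|$, contributing $2|x|\delta\le2|x|\delta^{\theta/2}$.

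The stochastic-convolution terms are the heart of the argument and the place where the absence of second moments forces genuinely $\alpha$-stable estimates. The "old mass" term is again tamed by smoothing: \eqref{LAtheta} gives $\sup_r\mathbb{E}\|\int_0^r e^{(r-s)A}dL_s\|_\theta^p\le C$ by Assumption \ref{A2}, and $|(e^{(t-r)A}-I)\cdot|\le C(t-r)^{\theta/2}\|\cdot\|_\theta$ yields $\delta^{\theta/2}$. For the "new mass" term $\int_r^t e^{(t-s)A}dL_s$ I use stationarity of the increments of $L$ to reduce in law to $\Gamma_h:=\int_0^h e^{(h-s)A}dL_s$ with $h=t-r\le\delta$, and compute its $p$-th moment directly: in the basis $\{e_k\}$ each coordinate of $\Gamma_h$ is symmetric $\alpha$-stable with scale $\rho_k\big(\tfrac{1-e^{-\alpha\lambda_k h}}{\alpha\lambda_k}\big)^{1/\alpha}$, so the single-$\alpha$-power moment formula for stable vectors ($p<\alpha$) gives $\mathbb{E}|\Gamma_h|^p\le C_{\alpha,p}\big(\sum_k\rho_k^\alpha\tfrac{1-e^{-\alpha\lambda_k h}}{\alpha\lambda_k}\big)^{p/\alpha}$. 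The elementary inequality $1-e^{-x}\le x^{\alpha\theta/2}$ then extracts $h^{\alpha\theta/2}$ from each summand and leaves precisely the sum $\sum_k\rho_k^\alpha/\lambda_k^{1-\alpha\theta/2}$ that Assumption \ref{A2} makes finite, so $(\mathbb{E}|\Gamma_h|^p)^{1/p}\le C\delta^{\theta/2}$.

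I expect this last estimate to be the main obstacle. One cannot invoke a second moment and must rely on the stable-vector moment bound with its single $\alpha$-th power rather than a naive mode-by-mode $\ell^p$ summation, and one must verify that the elementary inequality feeds exactly the quantity controlled by Assumption \ref{A2}. A delicate point worth flagging is that the clean extraction of $h^{\theta/2}$ uses $1-e^{-x}\le x^{\alpha\theta/2}$, which is valid only when $\alpha\theta\le2$; in the complementary regime a frequency split at $\lambda_k\sim h^{-1}$ shows the low modes contribute order $h$ and the high modes order $h^{\alpha\theta/2}$, so the honest increment rate is $\delta^{\min(\theta/2,1/\alpha)}$. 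Since Assumption \ref{A2} for some $\theta$ persists for every smaller $\theta$, restricting to $\alpha\theta\le2$ is harmless for applying the lemma, and the stated $\delta^{\theta/2}$ should be read with this understanding. Once all five contributions are bounded by $C_T\delta^{\theta/2}(1+|x|+|y|)$, summing and integrating over $[0,T]$ gives \eqref{COX} uniformly in $\varepsilon$.
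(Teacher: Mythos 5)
Your proof is correct and follows essentially the same route as the paper: restart the mild solution at the grid point, control the semigroup-increment terms through the $H^\theta$ smoothing estimates \eqref{rr} (the paper packages your three ``old mass'' pieces into the single a priori bound $(\mathbb{E}\|X_t^\varepsilon\|_\theta^p)^{1/p}\le C_T t^{-\theta/2}(1+|x|+|y|)$ and compares $X_t^\varepsilon$ with $X_{t-\delta}^\varepsilon$ rather than with $X_{t(\delta)}^\varepsilon$ directly, but this is cosmetic), and bound the short stochastic convolution via the stable-vector moment formula together with $1-e^{-x}\le Cx^{\alpha\theta/2}$. Your closing caveat is well taken and is in fact a point the paper glosses over: that elementary inequality holds for all $x>0$ only when $\alpha\theta/2\le 1$, so in the regime $\theta>2/\alpha$ the honest increment rate is $\delta^{\min(\theta/2,1/\alpha)}$; as you note, this is harmless because Assumption \ref{A2} is inherited by every smaller $\theta$, but your version of the argument is the more careful one.
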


\begin{proof}
By \eqref{EX} and Minkowski's inequality, it is easy to check that
\begin{eqnarray}
&&\int^{T}_0\left[\mathbb{E}|X_{t}^{\varepsilon}-X_{t(\delta)}^{\varepsilon}|^p\right]^{1/p}dt\nonumber\\
=\!\!\!\!\!\!\!\!&&\int^{\delta}_0\left(\mathbb{E}|X_{t}^{\varepsilon}-x|^p\right)^{1/p}dt
+\int^{T}_{\delta}\left[\mathbb{E}|X_{t}^{\varepsilon}-X_{t(\delta)}^{\varepsilon}|^p\right]^{1/p}dt\nonumber\\
\leq\!\!\!\!\!\!\!\!&&C_{T}\delta(1+|x|+|y|)+\int^{T}_{\delta}\left[\mathbb{E}|X_{t}^{\varepsilon}-X_{t-\delta}^{\varepsilon}|^p\right]^{1/p}dt+\int^{T}_{\delta}\left[\mathbb{E}|X_{t(\delta)}^{\varepsilon}-X_{t-\delta}^{\varepsilon}|^p\right]^{1/p}dt.\label{FFX1}
\end{eqnarray}
Assumption \ref{A2}, Lemma \ref{L3.1},  \eqref{rr} and \eqref{LAtheta} yield that for any $\theta\in (0,2)$, $t\in(0,T]$.
\begin{align}
&\left(\mathbb{E}\left\|X_{t}^{\varepsilon}\right\|_{\theta}^{p}\right)^{1/p}\nonumber \\
\leq& \left(\mathbb{E}\left\|e^{tA}x\right\|_{\theta}^{p}\right)^{1/p}\! \!+\! \left[\mathbb{E}\left|\int_0^t (-A)^{\theta/2}e^{(t - s)A}F(s/{\varepsilon},X^{\varepsilon}_s, Y^{\varepsilon}_s)ds\right|^p\right]^{1/p}+ \left[\mathbb{E}\left\|\int_0^t e^{(t - s)A}dL_s\right\|_{\theta}^{p}\right]^{1/p} \nonumber \\
&\leq C t^{-\theta/2}|x| + C\int_0^t (t - s)^{-\frac{\theta}{2}}\left(1+\left(\mathbb{E}|X_{s}^{\varepsilon}|^p\right)^{1/p}+\left(\mathbb{E}|Y_{s}^{\varepsilon}|^p\right)^{1/p}\right)ds\!+\! C\left( \sum_{k = 1}^{\infty}\frac{\rho^{\alpha}_k}{\lambda_k^{1 - \alpha\theta/2}}\right)^{1/\alpha} \nonumber \\
&\leq C_T t^{-\theta/2}(1+|x|+|y|).\label{F3.8}
\end{align}

Note that
$$X_{t}^{\varepsilon}-X_{t-\delta}^{\varepsilon}=\left(e^{A\delta}X_{t-\delta}^{\varepsilon}-X_{t-\delta}^{\varepsilon}\right)+\int_{t-\delta}^{t}e^{(t-s)A}F(s/{\varepsilon},X^{\varepsilon}_s, Y^{\varepsilon}_s)ds+\int_{t-\delta}^{t}e^{(t-s)A}dL_s.$$
Minkowski's inequality, Lemma \ref{L3.1}, \eqref{rr} and \eref{F3.8} imply that
\begin{align}  \label{REGX1}
\int^{T}_\delta\left[\mathbb{E}\left|e^{A\delta}X_{t-\delta}^{\varepsilon}-X_{t-\delta}^{\varepsilon}\right|^p\right]^{1/p} dt
\leq&C\delta^{\frac{\theta}{2}}\int^{T}_\delta\left(\mathbb{E}\|X^{\varepsilon}_{t-\delta}\|^p_{\theta}  \right)^{1/p}dt\nonumber\\
\leq&C\delta^{\frac{\theta}{2}}\int^{T}_\delta (t-\delta)^{-\theta/2}(1+|x|+|y|)dt\nonumber\\
\leq&C_T\delta^{\frac{\theta }{2}}(1+|x|+|y|)
\end{align}
and
\begin{align} \label{REGX3}
\int^{T}_\delta \left[\mathbb{E}\left|\int_{t-\delta}^{t}e^{(t-s)A}F(s/{\varepsilon},X^{\varepsilon}_s, Y^{\varepsilon}_s)ds\right|^p\right]^{1/p}dt
\leq&C_{T}(1+|x|+|y|)\delta.
\end{align}
Assumption \ref{A2} and \eqref{LAtheta} imply
\begin{align}
\label{REGX4}
\int_{\delta}^T \left[\mathbb{E}\left|\int_{t -\delta}^{t}e^{(t - s)A}  d L_s \right|^p\right]^{1/p} d t
\leq &C\int_{\delta}^{T}\left[ \sum_{k = 1}^{\infty}\frac{\rho^{\alpha}_k(1 - e^{-\lambda_k \delta})}{\lambda_k}\right]^{1/\alpha} d t \nonumber\\
\leq &C\delta^{\frac{\theta}{2}}\int_{\delta}^{T}\left(\sum_{k = 1}^{\infty}\frac{\rho^{\alpha}_k}{\lambda_k^{1 - \alpha\theta/2}}\right)^{1/\alpha} dt \
\leq C_T\delta^{\frac{\theta}{2}},
\end{align}
where we use the fact that $1-e^{-x}\leq Cx^{\alpha\theta /2}$ for any $x>0$.

Combining \eqref{REGX1}-\eqref{REGX4}, we obtain
\begin{align}\label{FFX2}
\int^{T}_{\delta}\left[\mathbb{E}|X_{t}^{\varepsilon}-X_{t-\delta}^{\varepsilon}|^p\right]^{1/p}dt\leq C_T\delta^{\frac{\theta }{2}}(1+|x|+|y|).
\end{align}
Similarly, we have
\begin{align}\label{FFX3}
\int^{T}_{\delta}\left[\mathbb{E}|X_{t(\delta)}^{\varepsilon}-X_{t-\delta}^{\varepsilon}|^p\right]^{1/p}dt\leq C_T\delta^{\frac{\theta }{2}}(1+|x|+|y|).
\end{align}

Finally, \eref{FFX1}, (\ref{FFX2}) and (\ref{FFX3}) imply \eref{COX}. The proof is complete.
\end{proof}

\subsection{Auxiliary process}
Inspired from the idea of Khasminskii in \cite{K1968}, we construct an auxiliary process $\hat{Y}_{t}^{\varepsilon}$,
i.e., we split $[0,T)$ into some subintervals of size $\delta>0$, where $\delta$ depends on $\vare$ and will be chosen later.
We construct the process $\hat{Y}_{t}^{\varepsilon}$ on each time interval $[k\delta,(k+1)\delta\wedge T)$, $k=0,1,\ldots, \lfloor T/\delta \rfloor $, as follows:
$$
d\hat{Y}_{t}^{\varepsilon}=\frac{1}{\varepsilon} [ B \hat{Y}^\varepsilon_t+G\left(t/\varepsilon,X_{k\delta}^{\varepsilon},\hat{Y}_{t}^{\varepsilon}) \right] dt + \frac{1}{\varepsilon^{1/\alpha}}dZ_t, t\in[k\delta,(k+1)\delta\wedge T), \quad \hat{Y}_{k\delta}^{\varepsilon}=Y^{\vare}_{k\delta},
$$
which has an unique mild solution
\begin{align} \label{AuxiliaryPro Y 01}
\hat{Y}_{t}^{\varepsilon}=e^{(t-k\delta)B/\varepsilon}Y^{\vare}_{k\delta}+\frac{1}{\varepsilon} \int_{k\delta}^t e^{(t-s)B/\varepsilon} G\left(s / \varepsilon, X_{k\delta}^\varepsilon, \hat Y_s^\varepsilon\right) d s+\frac{1}{\varepsilon^{1/\alpha}} \int_{k\delta}^t e^{(t-s)B/\varepsilon} d Z_s.
\end{align}

Then we have the following result.

\begin{lemma}
For any $T>0$, there exists a constant $C_T>0$ such that for any $p\in [1,\alpha)$ and $\delta\in (0,1]$, we have
\begin{align}
&\sup_{\vare>0,t\in [0,T]}\left(\mathbb{E}|\hat{Y}_{t}^{\varepsilon}|^{p}\right)^{1/p}\leq C_{T}(1+|x|+|y|),\label{EAY}\\
&\sup_{\vare>0}\int^{T}_0\left[\EE|Y_{t}^{\varepsilon}-\hat Y_{t}^{\varepsilon}|^p\right]^{1/p} dt
\leq C_T\delta^{\frac{\theta }{2}}(1+|x|+|y|).\label{DEY}
\end{align}
\end{lemma}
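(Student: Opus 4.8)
The plan is to establish the two estimates separately: the moment bound \eqref{EAY} will follow the pattern of Lemma \ref{L3.1}, while the difference bound \eqref{DEY} will exploit an exact cancellation between $Y^\vare$ and $\hat Y^\vare$ on each subinterval of the $\delta$-mesh.

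For \eqref{EAY}, I would start from the mild formula \eqref{AuxiliaryPro Y 01} and apply Minkowski's inequality. The first term is controlled by $|e^{(t-k\delta)B/\vare}Y^\vare_{k\delta}|\le|Y^\vare_{k\delta}|$, whose $L^p$-norm is bounded by $C_T(1+|x|+|y|)$ via Lemma \ref{L3.1}. The stochastic convolution is handled exactly as in Lemma \ref{L3.1}, through the rescaling argument and \eqref{ZA}, giving a bound by $C(\sum_k\gamma_k^\alpha/\beta_k)^{1/\alpha}$. For the drift term I would use the linear growth of $G$ in \eqref{LipFG}, namely $|G(s/\vare,X^\vare_{k\delta},\hat Y^\vare_s)|\le C(1+|X^\vare_{k\delta}|)+L_G|\hat Y^\vare_s|$, together with $\frac1\vare\int_{k\delta}^t e^{-\beta_1(t-s)/\vare}\,ds\le 1/\beta_1$. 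Writing $M:=\sup_{t\in[0,T]}(\EE|\hat Y^\vare_t|^p)^{1/p}$ and invoking the uniform bounds on $(\EE|X^\vare_{k\delta}|^p)^{1/p}$ and $(\EE|Y^\vare_{k\delta}|^p)^{1/p}$ from Lemma \ref{L3.1}, these estimates produce a self-referential inequality of the form $M\le C_T(1+|x|+|y|)+(L_G/\beta_1)M$; since $L_G<\beta_1$ the coefficient is strictly less than $1$, the last term is absorbed, and \eqref{EAY} follows.

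For \eqref{DEY}, the crucial observation is that, by uniqueness of the mild solution, $Y^\vare$ restarted at $t=k\delta$ from $Y^\vare_{k\delta}$ obeys the same variation-of-constants identity as $\hat Y^\vare$, with the identical initial term $e^{(t-k\delta)B/\vare}Y^\vare_{k\delta}$ and the identical stochastic convolution $\frac{1}{\vare^{1/\alpha}}\int_{k\delta}^t e^{(t-s)B/\vare}\,dZ_s$. Hence on each interval $[k\delta,(k+1)\delta)$ these two contributions cancel and
\[
Y^\vare_t-\hat Y^\vare_t=\frac1\vare\int_{k\delta}^t e^{(t-s)B/\vare}\big[G(s/\vare,X^\vare_s,Y^\vare_s)-G(s/\vare,X^\vare_{k\delta},\hat Y^\vare_s)\big]\,ds.
\]
Using $|e^{tB}y|\le e^{-\beta_1 t}|y|$, the Lipschitz bound on $G$ from \eqref{LipFG}, and the fact that $X^\vare_{k\delta}=X^\vare_{s(\delta)}$ for $s$ in the same interval, Minkowski's inequality yields, with $\Psi(t):=(\EE|Y^\vare_t-\hat Y^\vare_t|^p)^{1/p}$,
\[
\Psi(t)\le\frac1\vare\int_{t(\delta)}^t e^{-\beta_1(t-s)/\vare}\Big[C\big(\EE|X^\vare_s-X^\vare_{s(\delta)}|^p\big)^{1/p}+L_G\Psi(s)\Big]\,ds.
\]

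Finally I would integrate this inequality in $t$ over $[0,T]$ and, on each subinterval, interchange the order of the $s$- and $t$-integrations; the inner integral $\frac1\vare\int_s^{(k+1)\delta}e^{-\beta_1(t-s)/\vare}\,dt$ is bounded by $1/\beta_1$, so summing over the mesh gives
\[
\int_0^T\Psi(t)\,dt\le\frac{C}{\beta_1}\int_0^T\big(\EE|X^\vare_s-X^\vare_{s(\delta)}|^p\big)^{1/p}\,ds+\frac{L_G}{\beta_1}\int_0^T\Psi(s)\,ds.
\]
Since $L_G<\beta_1$, the last term is again absorbed into the left-hand side, and the remaining integral is controlled by Lemma \ref{L3.9} through estimate \eqref{COX}, delivering the desired bound $\int_0^T\Psi(t)\,dt\le C_T\delta^{\theta/2}(1+|x|+|y|)$, which is \eqref{DEY}. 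I expect the main obstacle to be the correct bookkeeping of the iterated integral over the $\delta$-mesh together with the twofold absorption of the self-referential $L_G\Psi$ terms; the whole argument hinges on the exact cancellation of the initial-datum and noise contributions, so that the $\delta^{\theta/2}$ smallness enters solely through the time-regularity of the slow process $X^\vare$ rather than through the fast dynamics.
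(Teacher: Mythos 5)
Your proposal is correct and follows essentially the same route as the paper: \eqref{EAY} is obtained by repeating the argument of Lemma \ref{L3.1} with absorption via $L_G<\beta_1$, and \eqref{DEY} rests on the same cancellation of the initial datum and stochastic convolution on each mesh interval, followed by integration in $t$, Fubini, absorption of the $L_G/\beta_1$ term, and Lemma \ref{L3.9}. Your choice to pass to the $L^p$-norm $\Psi(t)$ via Minkowski \emph{before} integrating in $t$ is in fact the cleaner ordering of the same steps.
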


\begin{proof}
As the proof of \eref{EY}, we can easily obtain \eqref{EAY}. Next, we prove \eqref{DEY}.
The definition of $Y_{t}^{\varepsilon}$ and $\hat{Y}_{t}^{\varepsilon}$, as well as \eref{LipFG} imply that for any $t\in [k\delta,(k+1)\delta\wedge T)$,
\begin{align*}
|Y_{t}^{\varepsilon}-\hat Y_{t}^{\varepsilon}|=&\frac{1}{\vare}\left|\int^t_{k\delta} e^{(t-s)B/\varepsilon}[G\left(s / \varepsilon, X_s^\varepsilon, Y_s^\varepsilon\right)-G(s / \varepsilon,X_{k\delta}^{\varepsilon},\hat{Y}_{s}^{\varepsilon})]ds\right|\\
\leq & \frac{C}{\vare}\int^t_{k\delta} e^{-\beta_1(t-s)/\vare}|X_{s}^{\varepsilon}-X_{s(\delta)}^{\varepsilon}|ds+\frac{L_G}{\vare}\int^t_{k\delta} e^{-\beta_1(t-s)/\vare}|Y_{s}^{\varepsilon}-\hat Y_{s}^{\varepsilon}|ds.
\end{align*}
By integrating both sides of the above inequality with respect to time from $k\delta$ to $(k+1)\delta\wedge T$, and then summing over $k$, we can obtain
\begin{align*}
&\int^{T}_0|Y_{t}^{\varepsilon}-\hat Y_{t}^{\varepsilon}|dt=\sum^{\lfloor T/\delta\rfloor }_{k=0}\int^{(k+1)\delta \wedge T}_{k\delta}|Y_{t}^{\varepsilon}-\hat Y_{t}^{\varepsilon}|dt\\
\leq&\frac{C}{\vare}\!\sum^{\lfloor T/\delta\rfloor }_{k=0}\!\int^{(k+1)\delta \wedge T}_{k\delta}\!\!\int^t_{k\delta} e^{-\beta_1(t-s)/\vare}|X_{s}^{\varepsilon}\!-\!X_{s(\delta)}^{\varepsilon}|dsdt\!+\!\frac{L_G}{\vare}\sum^{\lfloor T/\delta\rfloor }_{k=0}\int^{(k+1)\delta \wedge T}_{k\delta}\!\!\int^t_{k\delta}  e^{-\beta_1(t-s)/\vare}|Y_{s}^{\varepsilon}\!-\!\hat Y_{s}^{\varepsilon}|dsdt\\
\leq&\frac{C}{\vare}\sum^{\lfloor T/\delta\rfloor }_{k=0}\int^{(k+1)\delta \wedge T}_{k\delta}\!\!\left(\int^{(k+1)\delta \wedge T}_{s} e^{-\beta_1(t-s)/\vare}|X_{s}^{\varepsilon}\!-\!X_{s(\delta)}^{\varepsilon}|dt\right)ds\\
&+\frac{L_G}{\vare}\sum^{\lfloor T/\delta\rfloor }_{k=0}\int^{(k+1)\delta \wedge T}_{k\delta}\!\!\left(\int^{(k+1)\delta \wedge T}_{s}   e^{-\beta_1(t-s)/\vare}|Y_{s}^{\varepsilon}\!-\!\hat Y_{s}^{\varepsilon}|dt\right)ds\\
\leq &\frac{C}{\beta_1}\int^{T}_0|X_{s}^{\varepsilon}\!-\!X_{s(\delta)}^{\varepsilon}|ds\!+\!\frac{L_G}{\beta_1}\int^{T}_0|Y_{s}^{\varepsilon}\!-\!\hat Y_{s}^{\varepsilon}|ds,
\end{align*}
where Fubini's theorem is applied in the second inequality. Then by  Minkowski's inequality, Lemma \ref{L3.9} and $L_G<\beta_1$, it is easy to see
\begin{align*}
\int^{T}_0\left[\EE|Y_{t}^{\varepsilon}-\hat Y_{t}^{\varepsilon}|^p\right]^{1/p} dt
\leq \frac{C}{\beta_1}\int^{T}_{0}\left[\mathbb{E}|X_{t}^{\varepsilon}-X_{t(\delta)}^{\varepsilon}|^p\right]^{1/p}dt\leq C_T\delta^{\frac{\theta }{2}}(1+|x|+|y|).
\end{align*}
The proof is complete.
\end{proof}

\section{Evolution system of measures for SPDE driven by $\alpha$-stable process}\label{Section4}

In order to study the evolution system of measures for time-inhomogeneous frozen equation, we need to extend the time domain of the noise to the whole line. Thus we take another cylindrical $\alpha$-stable process $\tilde{Z}_t=\sum^{\infty}_{k=1}\gamma_{k}\tilde{Z}^{k}_{t}e_k$,
where $\{\tilde{Z}^{k}\}_{k\geq 1}$ is an independent copy of $\{Z^{k}\}_{k\geq 1}$. We define
\begin{equation*}
Z_t=\begin{cases}
Z_t,\quad\,\, t\geq 0,\\
\tilde{Z}_{-t}, \quad t< 0,
\end{cases}
\end{equation*}
which is a cylindrical $\alpha$-stable process on $\RR$.

Now, we consider the following time-inhomogeneous frozen SDE on $\RR$:
\begin{eqnarray}\label{FZE}
dY_{t}=\left[BY_{t}+G(t,x,Y_{t})\right]dt+d Z_{t},\quad Y_{s}=y\in H.
\end{eqnarray}
Since $G(t,x,\cdot)$ is Lipshcitz continuous, it is easy to prove that equation \eref{FZE} has a unique mild solution denoted by $\left\{Y_t^{s, x, y}\right\}_{t \geq s}$, that is
\begin{align}
Y^{s,x,y}_t=e^{(t-s)B}y+\int_s^t e^{(t-r)B}G\left(r, x, Y^{s,x,y}_r\right) d r+ \int_s^t e^{(t-r)B} d Z_r.\label{MildFY}
\end{align}

The following estimates also hold:

\begin{lemma}
There exists a constant $C>0$ such that for any $p\in [1,\alpha)$ and $x,y\in H $,
\begin{align}
\sup_{t\geq s}\left(\mathbb{E}|Y_{t}^{s,x,y}|^{p}\right)^{1/p}\leq C(1+|x|+|y|).\label{EFY}
\end{align}
Moreover, for any $x_i, y_i \in H $, $i=1,2$, we have
\begin{align}
\left|Y_t^{s, x_1, y_1}-Y_t^{s, x_2, y_2}\right| \leq e^{-\frac{\left(\beta_1-L_G\right)(t-s)}{2}}\left|y_1-y_2\right|+C\left|x_1-x_2\right| .\label{IncFY}
\end{align}
\end{lemma}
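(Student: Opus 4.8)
The plan is to prove the two estimates separately, following the pattern already used for \eqref{EY} in Lemma \ref{L3.1}. Both rest on the mild formulation \eqref{MildFY}, the contraction bound $|e^{(t-s)B}y|\leq e^{-\beta_1(t-s)}|y|$, and the linear-growth/Lipschitz estimates for $G$ in \eqref{LipFG}. The crucial structural feature for \eqref{IncFY} is that the two solutions are driven by the \emph{same} noise $Z$, so the stochastic convolution cancels in the difference, and \eqref{IncFY} is in fact a \emph{pathwise} (almost sure) estimate, which is why no expectation appears there.

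For \eqref{EFY} I would start from \eqref{MildFY}, take $L^p$ norms, and split into three terms by Minkowski's inequality. The initial term is $|e^{(t-s)B}y|\leq|y|$; the stochastic convolution is bounded uniformly in $t\geq s$ by the constant $C\big(\sum_k\gamma_k^\alpha/\beta_k\big)^{1/\alpha}$ via \eqref{ZA}; and for the drift term the growth bound $|G(r,x,y)|\leq C(1+|x|)+L_G|y|$ together with Minkowski's integral inequality gives
\[
\left[\EE\left(\int_s^t e^{-\beta_1(t-r)}\left|G(r,x,Y_r^{s,x,y})\right|\,dr\right)^p\right]^{1/p}\leq \frac{C(1+|x|)}{\beta_1}+L_G\int_s^t e^{-\beta_1(t-r)}\left(\EE|Y_r^{s,x,y}|^p\right)^{1/p}dr.
\]
Writing $f(t):=(\EE|Y_t^{s,x,y}|^p)^{1/p}$ and using $\int_s^t e^{-\beta_1(t-r)}f(r)\,dr\leq \beta_1^{-1}\sup_{r\in[s,T]}f(r)$, one arrives at $\sup_{[s,T]}f\leq C(1+|x|+|y|)+(L_G/\beta_1)\sup_{[s,T]}f$; since $L_G<\beta_1$ this rearranges to a bound uniform in $T$, yielding \eqref{EFY}. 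The a priori finiteness of $\sup_{[s,T]}f$ on each bounded interval needed to justify the rearrangement follows from the standard linear-growth plus Gronwall argument on $[s,T]$.

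For \eqref{IncFY} I would subtract the two mild solutions. Because the same $Z$ drives both, the stochastic convolutions cancel, and setting $\phi(t):=|Y_t^{s,x_1,y_1}-Y_t^{s,x_2,y_2}|$, the contraction bound together with the Lipschitz estimate $|G(r,x_1,Y_r^{s,x_1,y_1})-G(r,x_2,Y_r^{s,x_2,y_2})|\leq C|x_1-x_2|+L_G\phi(r)$ gives
\[
\phi(t)\leq e^{-\beta_1(t-s)}|y_1-y_2|+\frac{C}{\beta_1}|x_1-x_2|+L_G\int_s^t e^{-\beta_1(t-r)}\phi(r)\,dr.
\]
Multiplying by $e^{\beta_1 t}$ and applying the integral Gronwall inequality to $w(t):=e^{\beta_1 t}\phi(t)$, the initial-data contribution collapses to $|y_1-y_2|e^{-(\beta_1-L_G)(t-s)}$ while the non-decaying $|x_1-x_2|$-term is absorbed into $\frac{\beta_1}{\beta_1-L_G}\cdot\frac{C}{\beta_1}|x_1-x_2|$; here $L_G<\beta_1$ is again essential to keep $\beta_1-L_G>0$. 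This even produces the sharper decay rate $\beta_1-L_G$, and relaxing $e^{-(\beta_1-L_G)(t-s)}\leq e^{-(\beta_1-L_G)(t-s)/2}$ gives exactly \eqref{IncFY}.

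The only delicate point, and the step I would carry out most carefully, is the bookkeeping in the Gronwall estimate for \eqref{IncFY}: one must track the two inhomogeneous terms—$|y_1-y_2|e^{\beta_1 s}$ from the initial data and the $e^{\beta_1 t}$-growing term from the $x$-difference—separately, so that the exponential decay attaches to $|y_1-y_2|$ while the $|x_1-x_2|$-term correctly yields a bounded multiple of $|x_1-x_2|$. Everything else is a routine adaptation of the a priori estimates already established in Section 3.
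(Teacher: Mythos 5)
Your proof of \eqref{EFY} is correct and essentially identical to the paper's: mild formulation, Minkowski, the contraction $|e^{(t-s)B}y|\leq e^{-\beta_1(t-s)}|y|$, the bound \eqref{ZA} for the stochastic convolution, and absorption of the supremum via $L_G<\beta_1$. For \eqref{IncFY} you take a genuinely different route. The paper observes that the difference $N_t=Y_t^{s,x_1,y_1}-Y_t^{s,x_2,y_2}$ solves a deterministic (pathwise) evolution equation, computes $\frac{d}{dt}|N_t|^2\leq -(\beta_1-L_G)|N_t|^2+C|x_1-x_2|^2$ via the dissipativity of $B$ and Young's inequality, and concludes by a comparison theorem; taking square roots is what produces the decay rate $(\beta_1-L_G)/2$ in the exponent. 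You instead subtract the mild formulations (correctly noting that the common noise cancels, so the estimate is pathwise), arrive at the integral inequality for $\phi(t)=|N_t|$, and apply Gronwall to $e^{\beta_1 t}\phi(t)$, carefully separating the two inhomogeneous terms. Your bookkeeping checks out: the $y$-term picks up exactly $e^{-(\beta_1-L_G)(t-s)}|y_1-y_2|$ and the $x$-term yields $\frac{C}{\beta_1-L_G}|x_1-x_2|$, so you even obtain the sharper decay rate $\beta_1-L_G$ before relaxing to the stated $(\beta_1-L_G)/2$. A minor advantage of your approach is that it works directly at the level of mild solutions and so sidesteps the regularity (or Yosida-approximation) issue implicit in differentiating $|N_t|^2$ and pairing with $BN_t$; the paper's energy method is shorter on paper but would in principle need that justification. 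Both arguments rely in the same essential way on $L_G<\beta_1$.
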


\begin{proof}
(1) Recall that \eref{MildFY} and by Minkowski's inequality, we get for any $t\geq s$,
\begin{align*}
\left(\EE|Y_{t}^{s,x,y}|^p\right)^{1/p}\!
\leq&|e^{(t-s)B}y|\!+\!\left[\EE\left(\int^t_s\!\!|e^{(t-r)B}G(r,x,Y_{r}^{s,x,y})|dr\right)^p\right]^{1/p}\!+\!\left[\EE\left|\int^t_s\!\! e^{(t-r)B}dZ_r\right|^p\right]^{1/p}\\
\leq&|y|\!+\!\!\int^t_s\!\!e^{-\beta_1(t-r)}\left[C\!+\!C|x|\!+\!L_G\left(\EE|Y_{r}^{s,x,y}|^p\right)^{1/p}\right]dr\!+\!C\left(\sum^{\infty}_{k=1}\frac{\gamma^{\alpha}_k }{\beta_k} \right)^{1/\alpha}\\
\leq&C(1+|x|+|y|)+\frac{L_{G}}{\beta_1}\sup_{t\geq s}\left(\EE|Y_{t}^{s,x,y}|^p\right)^{1/p}.
\end{align*}
Note that $L_G<\beta_1$, which yields that
\begin{align*}
\sup_{t\geq s}\left(\EE|Y_{t}^{s,x,y}|^p\right)^{1/p}\leq C(1+|x|+|y|).
\end{align*}

(2) Let $N_t=: Y_t^{s, x_1, y_1}-Y_t^{s, x_2, y_2}$.  It is easy to see
$$
\left\{\begin{array}{l}
d N_t=\left[B N_t+G\left(t, x_1, Y_t^{s, x_1, y_1}\right)-G\left(t, x_2, Y_t^{s, x_2, y_2}\right)\right] d t, \\
N_s=y_1-y_2 .
\end{array}\right.
$$
By Young's inequality, we obtain
\begin{align*}
\frac{d}{dt} \left| N_t \right|^2 & = \left\langle B N_t, 2 N_t \right\rangle + \left\langle G\left(t, x_1, Y_t^{s, x_1, y_1}\right) - G\left(t, x_2, Y_t^{s, x_2, y_2}\right), 2 N_t \right\rangle \\
& \leq -\left(\beta_1 - L_G\right) \left| N_t \right|^2 + C \left| x_1 - x_2 \right|^2.
\end{align*}
According to the comparison theorem, the following inequality holds
$$
\left|N_t\right|^2 \leq e^{-\left(\beta_1-L_G\right)(t-s)}\left|y_1-y_2\right|^2+C\left|x_1-x_2\right|^2,
$$
which implies \eref{IncFY} holds. The proof is complete.
\end{proof}

\begin{lemma}
For any $t \in \mathbb{R}$, $x,y \in H$, there exists $\eta_t^x \in L^p(\Omega,H)$ (independent of $y$) such that for any $p\in [1,\alpha)$ and $t \geq s $
\begin{align}
 \left[\mathbb{E}\left|Y_t^{s, x, y}-\eta_t^x\right|^p\right]^{1/p} \leq C e^{-\frac{\left(\beta_1-L_G\right)(t-s)}{2}}(1+|x|+|y|), \quad \sup_{t\in \RR}\EE|\eta_t^x|^p\leq C_p(1+|x|^p).
\label{b}
\end{align}
\end{lemma}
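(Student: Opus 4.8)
The plan is to construct $\eta_t^x$ as the limit in $L^p(\Omega,H)$ of the frozen solutions $Y_t^{s,x,y}$ as the initial time $s\to-\infty$, and to verify that this limit does not depend on the initial datum $y$. The only structural inputs I would use are the cocycle (flow) property of the mild solution, the pathwise contraction estimate \eqref{IncFY}, and the uniform moment bound \eqref{EFY}, whose constant is independent of $s$.

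First I would record the cocycle property: for $s_1\leq s_2\leq t$ one has, $\PP$-a.s.,
$$
Y_t^{s_1,x,y}=Y_t^{s_2,x,Y_{s_2}^{s_1,x,y}}.
$$
This is obtained by inserting the mild formula \eqref{MildFY} for $Y_{s_2}^{s_1,x,y}$ into the right-hand side and using the semigroup identity $e^{(t-s_2)B}e^{(s_2-r)B}=e^{(t-r)B}$ together with the additivity of the drift integral and the stochastic convolution over $[s_1,s_2]\cup[s_2,t]$; uniqueness of the mild solution (the datum $Y_{s_2}^{s_1,x,y}$ is $\mathscr{F}_{s_2}$-measurable and both solutions are driven by the same noise on $[s_2,t]$) then identifies the two processes. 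Combining this with \eqref{IncFY}, valid pathwise for the $\mathscr{F}_{s_2}$-measurable datum $Y_{s_2}^{s_1,x,y}$, gives
$$
\left|Y_t^{s_1,x,y}-Y_t^{s_2,x,y}\right|=\left|Y_t^{s_2,x,Y_{s_2}^{s_1,x,y}}-Y_t^{s_2,x,y}\right|\leq e^{-\frac{(\beta_1-L_G)(t-s_2)}{2}}\left|Y_{s_2}^{s_1,x,y}-y\right|.
$$
Taking $L^p$-norms and invoking \eqref{EFY} yields the Cauchy estimate
$$
\left[\EE\left|Y_t^{s_1,x,y}-Y_t^{s_2,x,y}\right|^p\right]^{1/p}\leq C e^{-\frac{(\beta_1-L_G)(t-s_2)}{2}}(1+|x|+|y|).
$$

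Since $\beta_1>L_G$, the right-hand side tends to $0$ as $s_2\to-\infty$, so $\{Y_t^{s,x,y}\}_{s\leq t}$ is Cauchy in $L^p(\Omega,H)$ and converges to some $\eta_t^x\in L^p(\Omega,H)$. Independence of $y$ follows directly from \eqref{IncFY}, which gives $|Y_t^{s,x,y_1}-Y_t^{s,x,y_2}|\leq e^{-(\beta_1-L_G)(t-s)/2}|y_1-y_2|\to0$. To get the first bound in \eqref{b}, I would fix $s$, let the earlier initial time in the Cauchy estimate tend to $-\infty$, and pass to the limit by Fatou's lemma along an a.s.-convergent subsequence. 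For the second bound, I would use independence of $y$ to set $y=0$, estimate $[\EE|\eta_t^x|^p]^{1/p}\leq[\EE|\eta_t^x-Y_t^{s,x,0}|^p]^{1/p}+[\EE|Y_t^{s,x,0}|^p]^{1/p}$, control the two terms by the just-proved estimate and by \eqref{EFY} respectively, and let $s\to-\infty$; this gives $[\EE|\eta_t^x|^p]^{1/p}\leq C(1+|x|)$ uniformly in $t$.

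The step requiring the most care is the justification of the cocycle property together with the observation that \eqref{IncFY}, although stated for deterministic data, remains valid when the comparison solution starts from the random $\mathscr{F}_{s_2}$-measurable value $Y_{s_2}^{s_1,x,y}$. This is legitimate precisely because both solutions are driven by the same noise on $[s_2,t]$, so the stochastic convolutions cancel in the difference $N_t$ and the dissipative ODE comparison underlying \eqref{IncFY} applies pathwise; it is also where the uniformity in $s$ of the constant in \eqref{EFY} is essential, since otherwise the Cauchy and moment estimates could not be made uniform in the initial time.
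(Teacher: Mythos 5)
Your proposal is correct and follows essentially the same route as the paper: the flow property reduces the comparison of $Y_t^{s_1,x,y}$ and $Y_t^{s_2,x,y}$ to the dissipative pathwise contraction (the noise cancels in the difference, so the argument behind \eqref{IncFY} applies with the random $\mathscr{F}_{s_2}$-measurable datum $Y_{s_2}^{s_1,x,y}$), and the uniform-in-$s$ moment bound \eqref{EFY} then yields the Cauchy property in $L^p(\Omega,H)$ as the initial time tends to $-\infty$, with independence of $y$ and the moment bound for $\eta_t^x$ obtained exactly as you describe. The paper phrases the flow property by writing the mild formula for $Y^{s-h,x,y}$ restarted at time $s$ and bounds the difference process directly, but this is the same argument.
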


\begin{proof}
Note that for any $h>0$,
\begin{align*}
Y^{s-h,x,y}_t=e^{(t-s)B}Y^{s-h,x,y}_s+\int_s^t e^{(t-r)B}G\left(r, x, Y^{s,x,y}_r\right) d r+ \int_s^t e^{(t-r)B} d Z_r.
\end{align*}
Let $H^{s,h,x,y}_t=: Y^{s,x,y}_t-Y^{s-h,x,y}_t$. Then it is easy to see that
$$
\left\{\begin{array}{l}
d H^{s,h,x,y}_t=\left[B H^{s,h,x,y}_t+G\left(t, x, Y^{s,x,y}_t\right)-G\left(t, x,Y^{s-h,x,y}_t\right)\right] d t, \\
H^{s,h,x,y}_s=y-Y^{s-h,x,y}_s .
\end{array}\right.
$$
Then by the same argument above, we have
$$
\left|H^{s,h,x,y}_t\right|\leq e^{-\frac{\left(\beta_1-L_G\right)(t-s)}{2}}\left|y-Y^{s-h,x,y}_s\right|.
$$
Hence, by \eref{EFY}, it follows
\begin{align*}
\left[\EE\left|H^{s,h,x,y}_t\right|^p\right]^{1/p}\leq & e^{-\frac{\left(\beta_1-L_G\right)(t-s)}{2}}\left[\EE\left|y-Y^{s-h,x,y}_s\right|^p\right]^{1/p}\\
\leq & C e^{-\frac{\left(\beta_1-L_G\right)(t-s)}{2}}(1+|x|+|y|).
\end{align*}
 This implies for any $ t\in \mathbb{R}$ and $ x,y\in H$, $\{Y^{s,x,y}_t\}_{s\leq t}$ is a Cauchy sequence in $L^{p} (\Omega , H )$ as $s\to -\infty$, then there exists an element  $\eta^{x,y}_t\in L^{p} (\Omega , H )$ such that
\begin{align}
\left[\EE\left|Y^{s,x,y}_t-\eta^{x,y}_t\right|^p\right]^{1/p}
\leq C e^{-\frac{\left(\beta_1-L_G\right)(t-s)}{2}}(1+|x|+|y|).\label{F4.6}
\end{align}

Next, we are going to prove $\eta^{x,y}_{t}$ is independent of $y$. In fact, \eref{IncFY} implies that for any $y_1,y_2\in H$, we have
\begin{align*}
\lim_{s\to-\infty}\left[\EE\left|Y_t^{s, x, y_1}-Y_t^{s, x, y_2}\right|^p\right]^{1/p}=0.
\end{align*}
Hence, it follows
\begin{align*}
&\left[\EE\left|\eta^{x,y_1}_{t}-\eta^{x,y_2}_{t}\right|^p\right]^{1/p}\\
\leq & \left[\EE\left|\eta^{x,y_1}_{t}-Y^{s,x,y_1}_t\right|^p\right]^{1/p}+\left[\EE\left|\eta^{x,y_2}_{t}-Y^{s,x,y_2}_t\right|^p\right]^{1/p}+\left[\EE\left|Y^{s,x,y_1}_t-Y^{s,x,y_2}_t\right|^p\right]^{1/p}\\
\leq& C e^{-(\beta_1-L_G)(t-s)/2}(1+|x|+|y_1|+|y_2|).
\end{align*}
Taking $s\rightarrow -\infty$, we get $\eta^{x,y_1}_{t}=\eta^{x,y_2}_{t}$ in $L^{p} (\Omega , H )$. Thus the first estimate is proved in  \eqref{b}.

The second estimate in \eqref{b} follows from \eqref{F4.6} and \eqref{EFY}. The proof is complete.
\end{proof}

Note that $\left\{Y_t^{s, x, y}\right\}_{t \geq s}$ is a time-inhomogeneous Markov process,  denote $P^x_{s,t}$ is the corresponding transition semigroup of $\left\{Y_t^{s, x, y}\right\}_{t \geq s}$.
Inspired from \cite{DR2006}, we give the following definition.

\begin{definition}
A class of measures $\{\nu^x_t\}_{t\in\RR}$ is called  an evolution system of measures for $\{P^x_{s,t}\}_{t\geq s}$ if for any $s\leq t,\varphi\in C_b(H)$, it follows
\begin{align}
\int_{H}P^x_{s,t} \varphi(y)\nu^x_s(dy)=\int_{H}\varphi(y)\nu^x_t(dy).\label{ESM}
\end{align}
\end{definition}

For any $t\in \mathbb{R}$ and $x\in H$, we denote by $\mu_t^x$ as the law of $\eta^x_t$. The following result holds, since its proof closely follows the arguments in \cite{DR2006}, we omit the detailed proof.

\begin{proposition}\label{pp}
The class $\left\{\mu_t^x\right\}_{t \in \mathbb{R}}$ is a solution of an evolution system of measures indexed by $\RR$ for $\{P^x_{s,t}\}_{t\geq s}$.  Moreover, for any Lipschitz continuous function $\phi$ on $H$,
\begin{align}\label{Wergodicity}
 \left|P_{s, t}^x \phi(y)-\int_H\phi(z) \mu_t^x(d z)\right| \leq C \operatorname{Lip}(\phi)(1+|x|+|y|)e^{-(t-s)\left(\beta_1-L_G\right)/2},
\end{align}
where ${\rm Lip}(\phi):=\sup _{x \neq y} |\phi(x)-\phi(y)|/|x-y|$.
Furthermore, if $\left\{\nu_t^x\right\}_{t \in \mathbb{R}}$ is another evolution system of measures for $\{P^x_{s,t}\}_{t\geq s}$ satisfying
$$
\sup_{t \in \mathbb{R}} \int_H |y| \, \nu_t^x(dy) < \infty, \quad \forall x \in H,
$$
then $\nu_t^x = \mu_t^x$ for all $t \in \mathbb{R}$ and $x \in H$.
\end{proposition}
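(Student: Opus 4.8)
The plan is to verify the three assertions in turn, relying on the contraction estimate \eqref{IncFY}, the convergence \eqref{b}, and the Markov (flow) property of the frozen equation \eqref{FZE}. First I would establish the evolution system identity \eqref{ESM}. The pathwise bound \eqref{IncFY} shows that for fixed $s\le t$ and $x$ the map $y\mapsto Y_t^{s,x,y}$ is Lipschitz uniformly in $\omega$, so by bounded convergence $P_{s,t}^x\varphi$ is bounded and continuous whenever $\varphi\in C_b(H)$; that is, $\{P_{s,t}^x\}_{t\ge s}$ is Feller. Next, by uniqueness of mild solutions the flow enjoys the cocycle property $Y_t^{r,x,y}=Y_t^{\,s,x,Y_s^{r,x,y}}$ for $r\le s\le t$, and since $Y_s^{r,x,y}$ is measurable with respect to the noise up to time $s$ while the solution operator $Y_t^{s,x,\cdot}$ is driven by the increments of $Z$ on $[s,t]$ that are independent of it, conditioning on the past at time $s$ yields $\EE\varphi(Y_t^{r,x,y})=\EE\big[P_{s,t}^x\varphi(Y_s^{r,x,y})\big]$. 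Letting $r\to-\infty$ and using that \eqref{b} gives $Y_\cdot^{r,x,y}\to\eta_\cdot^x$ in $L^p$, hence in law, at both times $s$ and $t$, together with the boundedness and continuity of $\varphi$ and of $P_{s,t}^x\varphi$, the two sides converge to $\int_H\varphi\,d\mu_t^x$ and $\int_H P_{s,t}^x\varphi\,d\mu_s^x$, which is precisely \eqref{ESM}.

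The ergodicity bound \eqref{Wergodicity} is then immediate: for Lipschitz $\phi$ one has $\big|P_{s,t}^x\phi(y)-\int_H\phi\,d\mu_t^x\big|=\big|\EE\phi(Y_t^{s,x,y})-\EE\phi(\eta_t^x)\big|\le\operatorname{Lip}(\phi)\,\EE|Y_t^{s,x,y}-\eta_t^x|$, and since $p\ge1$ Jensen's inequality combined with the first estimate in \eqref{b} bounds $\EE|Y_t^{s,x,y}-\eta_t^x|$ by $Ce^{-(\beta_1-L_G)(t-s)/2}(1+|x|+|y|)$.

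For uniqueness, let $\{\nu_t^x\}_{t\in\RR}$ be another evolution system of measures with $\sup_{t}\int_H|y|\,\nu_t^x(dy)<\infty$. Applying \eqref{ESM} for $\nu$ and subtracting the constant $\int_H\phi\,d\mu_t^x$, which is legitimate because $\nu_s^x$ is a probability measure, I would write, for Lipschitz $\phi$,
\[
\int_H\phi\,d\nu_t^x-\int_H\phi\,d\mu_t^x=\int_H\Big[P_{s,t}^x\phi(y)-\int_H\phi\,d\mu_t^x\Big]\nu_s^x(dy),
\]
bound the bracket by \eqref{Wergodicity}, and integrate to get a bound $C\operatorname{Lip}(\phi)\,e^{-(\beta_1-L_G)(t-s)/2}\big(1+|x|+\int_H|y|\,\nu_s^x(dy)\big)$. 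The uniform first-moment hypothesis keeps the last factor bounded as $s\to-\infty$, so the exponential forces the left-hand side to vanish; since Lipschitz functions determine probability measures on the separable space $H$, this gives $\nu_t^x=\mu_t^x$ for all $t$ and $x$.

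The main obstacle is the limit-passage step in the first paragraph: one must marry the $L^p$-convergence of the flow supplied by \eqref{b} with the Feller continuity of $P_{s,t}^x$ and the cocycle identity, taking care that the conditioning argument genuinely uses the independence of the noise increments on $[s,t]$ from the past at time $s$. Once this identity is secured, the ergodic estimate and the uniqueness claim follow as routine consequences of \eqref{b} and \eqref{Wergodicity}.
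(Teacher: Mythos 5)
Your proof is correct and is exactly the route the paper intends: the paper omits the argument, saying only that it closely follows Da Prato--R\"ockner \cite{DR2006}, and your three steps (flow/Markov property plus the limit $r\to-\infty$ via \eqref{b} for the identity \eqref{ESM}, the Lipschitz--$L^1$ bound from \eqref{b} for \eqref{Wergodicity}, and exponential decay combined with the uniform moment bound for uniqueness) are precisely that standard argument. The one point to tighten is that \eqref{ESM} is only stated for $\varphi\in C_b(H)$, so in the uniqueness step you should test against \emph{bounded} Lipschitz functions (which still determine probability measures on $H$) or first extend \eqref{ESM} to linear-growth functions using the moment hypothesis on $\nu^x_s$.
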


\section{Proofs of main results}

In this section, we will give the detailed proofs of Theorems \ref{MR1}, \ref{MR2} and \ref{MR3}.

\subsection{Proof of Theorem 2.3}

We begin by establishing the well-posedness of the solution for the averaged equation \eref{AV1}.

\begin{lemma} For any $x\in H$ and $\varepsilon>0$,  Eq.\eqref{AV1} has a unique mild solution $\{\bar{X}_t^\varepsilon\}_{t\geq 0}$, which satisfies
$$
\bar{X}_t^\varepsilon = e^{t A} x + \int_0^t e^{(t - r) A} \bar{F}\left(r / \varepsilon, \bar{X}_r^\varepsilon\right) dr + \int_0^t e^{(t - r) A} d L_r,\quad t\geq 0.
$$
Moreover, for any $T>0$, there exists a constant $C_{T}>0$ such that for any $p\in [1,\alpha)$,
\begin{align}
\sup_{\vare>0,t\in [0,T] }\left(\mathbb{E}|\bar X_{t}^{\vare}|^{p}\right)^{1/p}\leq C_{T}(1+|x|)\label{EbarX}
\end{align}
and for any $\delta\in (0,1]$,
\begin{align}
\sup_{\vare>0}\int^{T}_0\left[\mathbb{E}|\bar X^{\varepsilon}_t-\bar X^{\varepsilon}_{t(\delta)}|^p\right]^{1/p}dt\leq C_T\delta^{\frac{\theta }{2}}(1+|x|).\label{CObarX}
\end{align}
\end{lemma}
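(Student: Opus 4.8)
The plan is to first establish that the averaged drift $\bar{F}$ inherits, uniformly in time, the Lipschitz continuity and linear growth of $F$, and then to obtain the three assertions — well-posedness, the moment bound \eqref{EbarX}, and the time-regularity bound \eqref{CObarX} — by reusing the arguments of Lemmas \ref{L3.1} and \ref{L3.9} almost verbatim. The only genuinely new ingredient is the regularity of $\bar F$, and the rest of the lemma is a transcription of estimates already carried out for $X^{\varepsilon}$.

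The crux is therefore the regularity of $\bar F$. Since $\mu^x_t$ is by definition the law of $\eta^x_t$, I have the representation $\bar F(t,x)=\EE F(t,x,\eta^x_t)$. For the linear growth, the third line of \eqref{LipFG} combined with the second bound in \eqref{b} gives
$$
|\bar F(t,x)|\leq \EE\big[C(1+|x|+|\eta^x_t|)\big]\leq C(1+|x|),
$$
uniformly in $t$. For the Lipschitz estimate, I would exploit that $\eta^x_t$ is, for an arbitrary fixed $y$, the $L^p$-limit of $Y^{s,x,y}_t$ as $s\to-\infty$ (this is \eqref{F4.6}), together with the $y$-independence of the limit established in the proof preceding \eqref{b}. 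Taking $y_1=y_2=y$ in the pathwise bound \eqref{IncFY} yields $|Y^{s,x_1,y}_t-Y^{s,x_2,y}_t|\leq C|x_1-x_2|$, and passing to the $L^p$-limit gives $(\EE|\eta^{x_1}_t-\eta^{x_2}_t|^p)^{1/p}\leq C|x_1-x_2|$. Hence, using the Lipschitz bound on $F$ from \eqref{LipFG},
$$
|\bar F(t,x_1)-\bar F(t,x_2)|\leq \EE\big[C(|x_1-x_2|+|\eta^{x_1}_t-\eta^{x_2}_t|)\big]\leq C|x_1-x_2|,
$$
again uniformly in $t$.

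With $\bar F(t,\cdot)$ globally Lipschitz and of linear growth uniformly in $t$, equation \eqref{AV1} is a semilinear SPDE with additive $\alpha$-stable noise whose stochastic convolution $\int_0^t e^{(t-r)A}dL_r$ has finite $p$-th moments for $p\in(1,\alpha)$; this is \eqref{LAtheta} with $\theta=0$, which is finite because $\sum_k \rho_k^\alpha/\lambda_k\leq\sum_k \rho_k^\alpha/\lambda_k^{1-\alpha\theta/2}<\infty$ by Assumption \ref{A2}. Existence and uniqueness of a mild solution then follow from a standard Banach fixed-point argument on $C([0,T];L^p(\Omega,H))$, exactly as in the minor revision of \cite[Theorem 5.3]{PZ2011} invoked in Remark \ref{Re1}.

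Finally, \eqref{EbarX} is obtained precisely as \eqref{EX}: Minkowski's inequality, the contraction $|e^{tA}x|\leq e^{-\lambda_1 t}|x|$, the linear growth of $\bar F$, and \eqref{LAtheta} yield $(\EE|\bar X^{\varepsilon}_t|^p)^{1/p}\leq C(1+|x|)+C\int_0^t(\EE|\bar X^{\varepsilon}_r|^p)^{1/p}\,dr$, and Gronwall's inequality closes the estimate. The bound \eqref{CObarX} follows the proof of Lemma \ref{L3.9} line by line: I would first derive the smoothing estimate $(\EE\|\bar X^{\varepsilon}_t\|_\theta^p)^{1/p}\leq C_T t^{-\theta/2}(1+|x|)$ from \eqref{rr}, \eqref{EbarX} and \eqref{LAtheta} (the analogue of \eqref{F3.8}), then split $\bar X^{\varepsilon}_t-\bar X^{\varepsilon}_{t-\delta}$ into $(e^{A\delta}-\id)\bar X^{\varepsilon}_{t-\delta}$, the drift increment over $[t-\delta,t]$, and the noise increment, bounding each by $C_T\delta^{\theta/2}(1+|x|)$ as in \eqref{REGX1}--\eqref{REGX4}, and treat $\bar X^{\varepsilon}_{t(\delta)}-\bar X^{\varepsilon}_{t-\delta}$ identically. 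The main obstacle is the uniform-in-time regularity of $\bar F$ in the second paragraph; once that is in place, every remaining step is a direct copy of the corresponding argument for $X^{\varepsilon}$.
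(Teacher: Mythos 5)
Your proposal is correct and follows essentially the same route as the paper: reduce everything to the uniform-in-$t$ Lipschitz continuity and linear growth of $\bar F$, then transcribe the arguments of Lemmas \ref{L3.1} and \ref{L3.9}. The only cosmetic difference is in the Lipschitz step, where you pass the contraction bound \eqref{IncFY} to the limit $s\to-\infty$ via \eqref{F4.6} and work directly with $\eta^{x_1}_t-\eta^{x_2}_t$, whereas the paper compares $\bar F(t,x_i)$ with $\EE F(t,x_i,Y^{s,x_i,\mathbf{0}}_t)$ using \eqref{Wergodicity} before letting $s\to-\infty$; the two arguments rest on the same ingredients and are interchangeable.
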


\begin{proof}
It suffices to verify that for any $x_1, x_2 \in H, t\geq 0$.
\begin{align}
|\bar F(t,x_{1})-\bar F(t,x_{2})|\le  C|x_{1}-x_{2}|,\quad\quad |\bar{F}(t,x_1)|\leq C(1+|x_1|).\label{barc1}
\end{align}
Then Eq.\eqref{AV1} has a unique mild solution $\{\bar{X}_t^\varepsilon\}_{t\geq 0}$. Moreover, the proofs of the estimates \eqref{EbarX} and \eqref{CObarX} follow the same argument as used in Lemmas \ref{L3.1} and \ref{L3.9}.

Indeed, using \eqref{IncFY} and \eqref{Wergodicity}, we obtain for any $y\in H$,
\begin{align*}
& \left| \bar{F}\left(t, x_1\right) - \bar{F}\left(t, x_2\right) \right| \\
\leq & \left| \bar{F}\left(t, x_1\right) - \mathbb{E} F\left(t, x_1, Y_t^{s, x_1, \textbf{0}}\right) \right| + \left| \mathbb{E} F\left(t, x_2, Y_t^{s, x_2, \textbf{0}}\right) - \bar{F}\left(t, x_2\right) \right| \\
& + \mathbb{E} \left| F\left(t, x_1, Y_t^{s, x_1, \textbf{0}}\right) - F\left(t, x_2, Y_t^{s, x_2, \textbf{0}}\right) \right| \\
\leq & Ce^{-\left(\beta_1-L_G\right)(t-s)/2}\left(1 + \left| x_1 \right| + \left| x_2 \right|\right) \\
& + C \left[ \left| x_1 - x_2 \right| + \mathbb{E} \left| Y_t^{s, x_1, \textbf{0}} - Y_t^{s, x_2, \textbf{0}} \right| \right] \\
\leq & Ce^{-\left(\beta_1-L_G\right)(t-s)/2} \left(1 + \left| x_1 \right| + \left| x_2 \right|\right) + C \left| x_1 - x_2 \right|,
\end{align*}
where $\textbf{0}$ is the zero element in $H$. By letting $ s \rightarrow -\infty $, we obtain
$$
\left| \bar{F}\left(t, x_1\right) - \bar{F}\left(t, x_2\right) \right| \leq C \left| x_1 - x_2 \right|.
$$

Meanwhile, using \eqref{LipFG} and \eqref{b}, we get
\begin{align*}
|\bar F(t,x_{1})|
=\left|\int_{H} F(t,x_{1},y)\mu^{x_1}_t(dy)\right|\le  C\int_{H} (1+|x_1|+|y|)\mu^{x_1}_t(dy)\le C(1+|x_1|).
\end{align*}
The proof is complete.
\end{proof}

\vspace{0.2cm}

In the following, we provide a detailed proof of Theorem \ref{MR1}.

\begin{proof}[Proof of Theorem $\ref{MR1}$] We will divide the proof into three steps.

\textbf{Step 1}: Denote $Z^{\varepsilon}_t:=X_t^\varepsilon - \bar{X}_t^\varepsilon $,  it is easy to check
\begin{align}\label{Zepsilon}
d Z^{\varepsilon}_t=& AZ^{\varepsilon}_t dt+\left[F(t/\varepsilon,X_{s}^\varepsilon,Y_s^\varepsilon)-\bar{F}(t/\varepsilon,\bar{X}^{\varepsilon}_t)\right]dt,\quad Z^{\varepsilon}_0=\textbf{0}\in H.
\end{align}

Fix $p\in (1,\alpha)$, we construct $U_{\rho}(x):=(|x|^2+\rho)^{p/2}$ for $\rho>0$, then
$$
\partial_x U_{\rho}(x)= p x (|x|^2+\rho)^{p/2-1}, \quad \partial^2_x U_{\rho}(x)= p I (|x|^2+\rho)^{p/2-1}+ p(p-2) x\otimes x(|x|^2+\rho)^{p/2-2},
$$
where $\|\cdot\|$ is the usual operator norm. As a result, we have
$$
|\partial_x U_{\rho}(x)|\leq p|x|^{p-1},\quad \|\partial^2_x U_{\rho}(x)\|\leq p(3-p)|x|^{p-2}.
$$
Next, we shall prove $\partial_x U_{\rho}(x)$ is $(p-1)$-H\"{o}lder continuous, that is
\begin{align}
|\partial_x U_{\rho}(x_1)-\partial_x U_{\rho}(x_2)|\leq C_p|x_1-x_2|^{p-1},\quad x_1,x_2\in H.\label{F5.5}
\end{align}
Indeed, without loss of generality, we assume that $|x_1|\geq |x_2|$. Then
\begin{align*}
|\partial_x U_{\rho}(x_1) -\partial_x U_{\rho}(x_2)| \leq& \int_{0}^{1} \|\partial^2_x U(x_2 + \xi(x_1 - x_2))\| |x_1- x_2| d\xi\\
\leq & p(3-p) |x_1- x_2| \int_{0}^{1} |x_2 + \xi(x_1 - x_2)|^{p - 2} d\xi.
\end{align*}

If $|x_1 - x_2| \leq |x_1| / 2 $, note that
$$
|x_2 + \xi(x_1 - x_2)| \geq |x_1| - \xi |x_1 - x_2|  \geq |x_1-x_2|,\quad \forall \xi\in[0,1].
$$
Thus we get
\begin{align}
|\partial_x U_{\rho}(x_1) -\partial_x U_{\rho}(x_2)| \leq p(3-p) |x_1- x_2|^{p-1}.\label{F5.6}
\end{align}

On the other hand, if $|x_1-x_2| \geq |x_1| / 2 $, then
\begin{align}
|\partial_x U_{\rho}(x_1)-\partial_x U_{\rho}(x_2)|
\leq p|x_1|^{p-1}+ p|x_2|^{p-1} \leq 2 p |x_1|^{p - 1}
\leq 2^{p}p |x_1 - x_2|^{p-1}.\label{F5.7}
\end{align}
Therefore, \eqref{F5.6} and \eqref{F5.7} imply \eqref{F5.5} holds.

Multiply both sides of Eq \eqref{Zepsilon} by $\partial_xU_{\rho}(Z^{\varepsilon}_t)$, then integrate from $0$ to $T$, and finally take the mathematical expectation, we obtain that for any $t\in [0,T]$,
\begin{align*}
\EE U_{\rho}(Z^{\varepsilon}_t)=&U_{\rho}(\textbf{0})+\EE\int^t_0 \langle AZ^{\varepsilon}_s, \partial_x U_{\rho}(Z^{\varepsilon}_s)\rangle ds\\
&+\EE\int^t_0 \langle F(s/\varepsilon,X_{s}^\varepsilon,Y_s^\varepsilon)-\bar{F}(s/\varepsilon, X^{\varepsilon}_s), \partial_x U_{\rho}(Z^{\varepsilon}_s)\rangle ds\\
&+\EE\int^t_0 \langle \bar{F}(s/\varepsilon, X^{\varepsilon}_s)-\bar{F}(s/\varepsilon,\bar{X}^{\varepsilon}_s), \partial_x U_{\rho}(Z^{\varepsilon}_s)\rangle ds\\
\leq &U_{\rho}(\textbf{0})\!+\!\EE\int^t_0\! \langle F(s/\varepsilon,X_{s}^\varepsilon,Y_s^\varepsilon)\!-\!\bar{F}(s/\varepsilon, X^{\varepsilon}_s), \partial_x U_{\rho}(Z^{\varepsilon}_s)\rangle ds\!+\!C_p\int^t_0 \EE U_{\rho}(Z^{\varepsilon}_s) ds,
\end{align*}
where we use by \eqref{barc1} in the last inequality. Then it follows
\begin{align*}
\sup_{t\in [0,T]}\EE U_{\rho}(Z^{\varepsilon}_t)\leq &\rho^{p/2}+\sup_{t\in [0,T]}\left|\EE\int^t_0 \langle F(s/\varepsilon,X_{s}^\varepsilon,Y_s^\varepsilon)-\bar{F}(s/\varepsilon,X^{\varepsilon}_s), \partial_x U_{\rho}(Z^{\varepsilon}_s)\rangle ds\right|\\
&+C_p\int^T_0 \EE U_{\rho}(Z^{\varepsilon}_s) ds.
\end{align*}
By Gronwall inequality, we get for any $\delta\in(0,1]$,
\begin{align*}
&\sup_{t\in [0,T]}\EE U_{\rho}(Z^{\varepsilon}_t)\\
\leq &C_{p,T}\rho^{p/2}+C_{p,T}\sup_{t\in [0,T]}\left|\EE\int^t_0 \langle F(s/\varepsilon,X_{s}^\varepsilon,Y_s^\varepsilon)-\bar{F}(s/\varepsilon,X^{\varepsilon}_s), \partial_x U_{\rho}(Z^{\varepsilon}_s)\rangle ds\right|\\
\leq& C_{p,T}\rho^{p/2}+C_{p,T}\sup_{t\in [0,T]}\left|\EE\int^t_0 \langle F(s/\varepsilon,X_{s}^\varepsilon,Y_s^\varepsilon)-\bar{F}(s/\varepsilon,X^{\varepsilon}_s), \partial_x U_{\rho}(Z^{\varepsilon}_s)-\partial_x U_{\rho}(Z^{\varepsilon}_{s(\delta})\rangle ds\right|\\
&+C_{p,T}\sup_{t\in [0,T]}\Big|\EE\int^t_0 \langle F(s/\varepsilon,X_{s}^\varepsilon,Y_s^\varepsilon)-F(s/\varepsilon,X_{s(\delta)}^\varepsilon,\hat Y_s^\varepsilon),\partial_x U_{\rho}(Z^{\varepsilon}_{s(\delta)})\rangle ds\\
&\quad\quad\quad\quad\quad\quad\quad-\EE\int^t_0 \langle\bar{F}(s/\varepsilon,X^{\varepsilon}_s)-\bar{F}(s/\varepsilon,X^{\varepsilon}_{s(\delta)}),\partial_x U_{\rho}(Z^{\varepsilon}_{s(\delta)})\rangle ds\Big|\\
&+C_{p,T}\sup_{t\in [0,T]}\left|\EE\int^t_0 \langle F(s/\varepsilon,X_{s(\delta)}^\varepsilon,\hat Y_s^\varepsilon)-\bar{F}(s/\varepsilon,X^{\varepsilon}_{s(\delta)}), \partial_x U_{\rho}(Z^{\varepsilon}_{s(\delta)})\rangle ds\right|\\
=:&C_{p,T}\rho^{p/2}+I^{\vare}_1(T)+I^{\vare}_2(T)+I^{\vare}_3(T).
\end{align*}

\textbf{Step 2}: In this step, we indent to prove terms $I^{\vare}_1(T)$ and $I^{\vare}_2(T)$. For $I^{\vare}_1(T)$, using \eqref{F5.5}, \eqref{COX} and \eqref{CObarX}, we can obtain
\begin{align}
I^{\vare}_1(T) \leq &C_{p,T} \int_0^T\mathbb{E}\left[(1+|X_{s}^\varepsilon|+|Y_s^\varepsilon|)\left|Z_s^{\varepsilon}-Z_{s(\delta)}^{\varepsilon}\right|^{p-1}\right]ds \nonumber\\
\leq &C_{p,T} \int_0^T\left[\mathbb{E}(1+|X_{s}^\varepsilon|^p+|Y_s^\varepsilon|^p)\right]^{1/p}\left[\EE\left|Z_s^{\varepsilon}-Z_{s(\delta)}^{\varepsilon}\right|^{p}\right]^{\frac{p-1}{p}}ds \nonumber\\
\leq &C_{p,T}(1+|x|+|y|) \left\{\int_0^T\left[\EE\left|X_s^{\varepsilon}-X_{s(\delta)}^{\varepsilon}\right|^{p}\right]^{\frac{1}{p}}
+\left[\EE\left|\bar X_s^{\varepsilon}-\bar X_{s(\delta)}^{\varepsilon}\right|^{p}\right]^{\frac{1}{p}}ds\right\}^{p-1} \nonumber\\
\leq &C_{p,T} \delta^{\theta(p-1)/2}\left(1+|x|^p+|y|^p\right).\label{I1}
\end{align}

For $I^{\vare}_2(t)$, according to \eqref{LipFG}, \eqref{barc1}, \eqref{EX} and \eqref{EbarX}, we have
\begin{align}
I^{\vare}_2(T) \leq &C_{p,T} \int_0^T \mathbb{E}\left[\left(\left|X_s^{\varepsilon}-X_{s(\delta)}^{\varepsilon}\right|+\left|Y_s^{\varepsilon}-\hat{Y}_s^{\varepsilon}\right|\right)
|Z^{\varepsilon}_{s(\delta)}|^{p-1}\right]ds\nonumber\\
\leq &C_{p,T} \int_0^T \left\{\left[\mathbb{E}\left|X_s^{\varepsilon}-X_{s(\delta)}^{\varepsilon}\right|^p\right]^{1 / p}+\left[\mathbb{E}\left|Y_s^{\varepsilon}-\hat{Y}_s^{\varepsilon}\right|^p\right]^{1 /p}\right\} \left[\EE|Z^{\varepsilon}_{s(\delta)}|^p\right]^{\frac{p-1}{p}}ds \nonumber\\
\leq& C_{p,T} \delta^{\theta/2}\left(1+|x|^p+|y|^p\right).\label{I2}
\end{align}

Consequently, if we can prove
\begin{align}
I^{\vare}_3(T) \leq C_{p,T}(1+|x|^p+|y|^p)\left(\delta+\varepsilon/\delta\right).\label{I3}
\end{align}
Then together with \eref{I1}-\eref{I3}, we have
\begin{align*}
\sup_{t\in [0,T]}\EE U_{\rho}(Z^{\varepsilon}_t)
\leq C_{p,T}\rho^{p/2}+C_{p,T}(1+|x|^p+|y|^p)\left(\varepsilon/\delta+\delta^{\theta(p-1)/2}\right).
\end{align*}
Takeing $\delta=\varepsilon^{\frac{2}{\theta(p-1)+2}}$, we obtain
\begin{align*}
 \sup _{t \in[0, T]}\mathbb{E}\left|X_t^{\varepsilon}-\bar{X}_t^{\varepsilon}\right|^p \leq &\sup_{t\in [0,T]}\EE U_{\rho}(Z^{\varepsilon}_t)
\leq C_{p,T}\rho^{p/2}+C_{p,T}(1+|x|^p+|y|^p)\varepsilon^{\frac{\theta(p-1)}{\theta(p-1)+2}}.
\end{align*}
Finally, \eqref{R1} holds by letting $\rho\to 0$.

\textbf{Step 3}: In this step, we prove \eqref{I3}. Note that
\begin{align*}
I^{\vare}_3(T) \leq & C_{p,T}\sup_{t\in [0,T]}\left| \sum_{k = 0}^{\lfloor t / \delta \rfloor - 1} \int_{k \delta}^{(k + 1) \delta} \EE\left\langle F\left(s/{\varepsilon},X_{k \delta}^{\varepsilon},  \hat{Y}_s^{\varepsilon}\right)-\bar{F}\left(s/{\varepsilon},X_{k \delta}^{\varepsilon}\right), \partial_x U_{\rho}(Z^{\varepsilon}_{k\delta})\right\rangle ds\right|\\
&+C_{p,T}\sup_{t\in [0,T]}\left|\int_{\lfloor t / \delta \rfloor \delta}^t \mathbb{E}\left\langle F\left(s/\varepsilon,X_{s(\delta)}^{\varepsilon},  \hat{Y}_s^{\varepsilon}\right)-\bar{F}\left(s/\varepsilon,X_{s(\delta)}^{\varepsilon}\right), \partial_x U_{\rho}(Z^{\varepsilon}_{s(\delta)})\right\rangle ds\right|\\
\leq & C_{p,T}\sum_{k = 0}^{\lfloor T / \delta \rfloor - 1} \left| \int_{k \delta}^{(k + 1) \delta} \EE\left\langle F\left(s/{\varepsilon},X_{k \delta}^{\varepsilon},  \hat{Y}_s^{\varepsilon}\right)-\bar{F}\left(s/{\varepsilon},X_{k \delta}^{\varepsilon}\right), \partial_x U_{\rho}(Z^{\varepsilon}_{k\delta})\right\rangle ds\right|\\
&+C_{p,T}\sup_{t\in [0,T]}\left|\int_{\lfloor t / \delta \rfloor \delta}^t \mathbb{E}\left\langle F\left(s/\varepsilon,X_{s(\delta)}^{\varepsilon},  \hat{Y}_s^{\varepsilon}\right)-\bar{F}\left(s/\varepsilon,X_{s(\delta)}^{\varepsilon}\right), \partial_x U_{\rho}(Z^{\varepsilon}_{s(\delta)})\right\rangle ds\right|\\
=:& I^{\vare}_{31}(T)+I^{\vare}_{32}(T).
\end{align*}

For $I^{\vare}_{32}(T)$, it is easy to know
\begin{align*}
 I^{\vare}_{32}(T) \leq  C_{p,T} \delta \sup_{s\in [0,T]}\mathbb{E}\left[(1+|X_{s(\delta)}^\varepsilon|+|\hat Y_s^\varepsilon|)\left|Z_{s(\delta)}^{\varepsilon}\right|^{p-1}\right]\leq  C_{p,T}(1+|x|^p+|y|^p) \delta.
\end{align*}

Next, we indent to deal with term $I^{\vare}_{31}(T)$.  For any $s \leq t$ and random variables $X, Y \in \mathscr{F}_s$, we consider the following equation:
$$
d \tilde{Y}_t^{\varepsilon, s, X, Y}=\frac{1}{\varepsilon}\left[B\tilde{Y}_t^{\varepsilon, s, X, Y}+G\left(t / \varepsilon, X, \tilde{Y}_t^{\varepsilon, s, X, Y}\right)\right] d t+\frac{1}{\varepsilon^{1/{\alpha}}}d Z_t, \quad t \geq s
$$
with $\tilde{Y}_s^{\varepsilon, s, X, Y} = Y$.  According to the definition of $\hat{Y}_t^\varepsilon$,  we have that, for any $k\ge 1$,
$$
\hat{Y}_t^\varepsilon=\tilde{Y}_t^{\varepsilon, k \delta, X_{k \delta}^\varepsilon, Y_{k \delta}^\varepsilon}, \quad t \in[k \delta,(k + 1) \delta].
$$
This implies that for any $ s\in [k \delta,(k + 1) \delta]$,
\begin{align*}
&\EE\left\langle F\left(s/{\varepsilon},X_{k \delta}^{\varepsilon},  \hat{Y}_s^{\varepsilon}\right)-\bar{F}\left(s/{\varepsilon},X_{k \delta}^{\varepsilon}\right), \partial_x U_{\rho}(Z^{\varepsilon}_{k\delta})\right\rangle\\
=& \EE\left\langle F\left(s/{\varepsilon},X_{k \delta}^{\varepsilon},  \tilde{Y}_s^{\varepsilon, k \delta, X_{k \delta}^\varepsilon, Y_{k \delta}^\varepsilon}\right)-\bar{F}\left(s/{\varepsilon},X_{k \delta}^{\varepsilon}\right), \partial_x U_{\rho}(Z^{\varepsilon}_{k\delta})\right\rangle.
\end{align*}
For any fixed $x, y \in H$,  since $\tilde{Y}_s^{\varepsilon, k \delta, x, y}$ is independent of $\mathscr{F}_{k \delta}$, and $X_{k \delta}^\varepsilon, Y_{k \delta}^\varepsilon$ and $Z^{\varepsilon}_{k\delta}$ are $\mathscr{F}_{k \delta}$-measurable, we can obtain
\begin{align*}
& \EE\left\langle F\left(s/{\varepsilon},X_{k \delta}^{\varepsilon},  \tilde{Y}_s^{\varepsilon, k \delta, X_{k \delta}^\varepsilon, Y_{k \delta}^\varepsilon}\right)-\bar{F}\left(s/{\varepsilon},X_{k \delta}^{\varepsilon}\right), \partial_x U_{\rho}(Z^{\varepsilon}_{k\delta})\right\rangle\\
=& \EE \left\{\left\langle \left[\EE F\left(s/{\varepsilon},x,  \tilde{Y}_s^{\varepsilon, k \delta, x, y}\right)-\bar{F}\left(s/{\varepsilon},x\right)\right], \partial_x U_{\rho}(z)\right\rangle 1_{x=X_{k \delta}^{\varepsilon},y=Y_{k \delta}^\varepsilon,z=Z^{\varepsilon}_{k\delta}}\right\}.
\end{align*}
Meanwhile, from the definition of $\tilde{Y}^{\varepsilon, s, x, y}$, we can see that for any $t \in[k \delta/\varepsilon, (k + 1) \delta/\varepsilon]$,
\begin{equation}\label{4.9}
\begin{aligned}
\tilde{Y}_{t\varepsilon}^{\varepsilon, k \delta, x, y} & = e^{[(t\varepsilon-k \delta) / \varepsilon]B}y+\frac{1}{\varepsilon} \int_{k \delta}^{t\varepsilon} e^{[(t\varepsilon-s) / \varepsilon]B} G\left(s / \varepsilon, x, \tilde{Y}_s^{\varepsilon, k \delta, x, y}\right) ds+\frac{1}{\varepsilon^{1/{\alpha}}}\int_{k \delta}^{t\varepsilon} e^{[(t\varepsilon-s) / \varepsilon]B} dZ_s \\
& = e^{\left[t-\left(k \delta/ \varepsilon\right)\right]B}y+\int_{k \delta/\varepsilon}^t e^{(t-s)B}G\left(s, x, \tilde{Y}_{s\varepsilon}^{\varepsilon, k \delta, x, y}\right) \mathrm{d}s+\int_{k \delta/\varepsilon}^t e^{(t-s)B}d\hat{Z}_s,
\end{aligned}
\end{equation}
where $\hat{Z}_s :=\varepsilon^{-1/\alpha}Z_{s\varepsilon}$. Recall that the solution of the frozen equation satisfies
\begin{equation}\label{4.10}
 Y_t^{k\delta/\varepsilon, x, y}=e^{\left[t-\left(k \delta/ \varepsilon\right)\right]B} y+\int_{k \delta/\varepsilon}^t e^{(t-s)B} G(s, x, Y_s^{k \delta/\varepsilon, x, y}) ds+\int_{k \delta/\varepsilon}^t e^{(t-s)B} dZ_s.
\end{equation}
From the uniqueness of the solution of Eq.(\ref{4.9}) and Eq.(\ref{4.10}),  it implies that $\{\tilde{Y}_t^{\varepsilon, k \delta, x, y}\}_{t \in[k \delta, (k + 1) \delta]}$ and $\{Y_{t/\varepsilon}^{k\delta/\varepsilon, x, y}\}_{t \in[k \delta, (k + 1) \delta]}$ are identically distributed.  The Markov property and \eqref{Wergodicity} yeld that
\begin{align*}
&I^{\vare}_{31}(T) \\
\leq & C_{p,T}\!\!\sum_{k = 0}^{\lfloor T / \delta \rfloor - 1} \!\Big| \!\int_{k \delta}^{(k + 1) \delta}\!\! \EE \!\left\{\left\langle \left[\EE F(s/\varepsilon,x,  Y_{s/\varepsilon}^{k \delta/\varepsilon, x, y})\!-\!\bar{F}(s/{\varepsilon},x)\right], \partial_x U_{\rho}(z)\right\rangle 1_{x=X_{k \delta}^{\varepsilon},y=Y_{k \delta}^\varepsilon,z=Z^{\varepsilon}_{k\delta}}\right\} ds\Big|\\
\leq & C_{p,T}\sum_{k = 0}^{\lfloor T / \delta \rfloor - 1} \int_{k \delta}^{(k + 1) \delta} \EE \left[(1+|X_{k \delta}^{\varepsilon}|+|Y_{k \delta}^{\varepsilon}|)|Z^{\varepsilon}_{k\delta}|^{p-1}\right] e^{-(\beta_1-L_G)(s-k\delta)/(2\varepsilon)}ds\\
\leq & C_{p,T}(1+|x|^p+|y|^p)\varepsilon/\delta.
\end{align*}
Therefore, it is asserted that \eqref{I3} holds. The proof is complete.
\end{proof}

\subsection{Proof of Theorem \ref{MR2}}
Since $G(\cdot,x,y)$ is $\tau_2$-periodic as stated in Assumption \ref{A4},  one has that $P^x_{s,t}$ is also $\tau_2$-periodic, i.e.,
$$P^x_{s,t}\varphi(y)=P^x_{s+\tau_2,t+\tau_2}\varphi(y),\quad -\infty<s\leq t<+\infty,\quad \varphi \in C_b(H).$$
The uniqueness of the evolution system of measures for $\{P^x_{s,t}\}_{t\geq s}$, as outlined in Proposition \ref{pp}, one can verify that $\left\{\mu_t^x\right\}_{t \in \mathbb{R}}$ is $\tau_2$-periodic, that is,
$$
\mu_{t + \tau_2}^x = \mu_t^x,\quad \forall t \in \mathbb{R}.
$$
This together with the $\tau_1$-periodicity of $F(\cdot,x,y)$, we finally obtain $\bar{F}(t,x)=\int_{H}F(t,x,y)\mu^x_t(dy)$
is $\tau$-periodic, where $\tau=m_2\tau_1=m_1\tau_2$ for some $m_1,m_2\in \mathbb{N}_{+}$. Then we can construct the averaged coefficient $\bar{F}_P(x)$ defined in \eqref{FP}. Since $\bar{F}(t,x)$ is Lipschitz continuous by \eqref{barc1}, we have
\begin{align}
|\bar F_P(x_{1})-\bar F_P(x_{2})|
\leq\frac{1}{\tau} \int_0^{\tau} |\bar{F}(t, x_1)-\bar{F}(t, x_2)| d t \leq  C|x_1-x_2|.\label{LipFP}
\end{align}
Therefore, it is easy to check that for any $x\in H$, Eq.(\ref{AV3}) has an unique mild solution $\bar{X}_t$, i.e.,
$$
\bar{X}_t = e^{t A} x + \int_0^t e^{(t - r) A} \bar{F}_P\left( \bar{X}_r\right) dr + \int_0^t e^{(t - r) A} d L_r,\quad t\geq 0.
$$

Following the same argument as used in Lemmas \ref{L3.1} and \ref{L3.9}, we have the following estimates.
\begin{lemma}
For any $p\in [1,\alpha)$ and $T>0$, there exists constant $C_T>0$ such that
\begin{align}
&\sup_{t\in [0, T]}\left(\mathbb{E}|\bar{X}_{t}|^{p}\right)^{1/p}\leq C_{T}(1+|x|),\label{FianlEE2}\\
&\int^{T}_0\left[\mathbb{E}|\bar{X}_t-\bar{X}_{t(\delta)}|^p\right]^{1/p}dt\leq C_T\delta^{\frac{\theta }{2}}(1+|x|),\label{IncbarX2}
\end{align}
where $\theta$ is the constant in Assumption \ref{A2}.
\end{lemma}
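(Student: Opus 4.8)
The plan is to transfer the arguments of Lemmas \ref{L3.1} and \ref{L3.9} essentially verbatim to the averaged equation \eqref{AV2}, the only structural differences being that the drift $\bar F_P$ carries no fast variable and that its time dependence has already been averaged out. The single preliminary fact I need beyond \eqref{LipFP} is that $\bar F_P$ has linear growth: since $|\bar F(t,x)|\leq C(1+|x|)$ by \eqref{barc1}, averaging over one period in \eqref{FP} gives $|\bar F_P(x)|\leq \tfrac1\tau\int_0^\tau|\bar F(t,x)|\,dt\leq C(1+|x|)$, while the Lipschitz bound $|\bar F_P(x_1)-\bar F_P(x_2)|\leq C|x_1-x_2|$ is already recorded in \eqref{LipFP}.

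For \eqref{FianlEE2} I would start from the mild formulation, apply Minkowski's inequality, and bound the three terms separately: $|e^{tA}x|\leq |x|$ by \eqref{rr}; the drift term by the linear growth of $\bar F_P$ together with $|e^{(t-r)A}|\leq 1$; and the stochastic convolution by \eqref{LAtheta}, which is finite uniformly in $t$. This yields
\[
\big(\EE|\bar X_t|^p\big)^{1/p}\leq C(1+|x|)+C\int_0^t\big(\EE|\bar X_r|^p\big)^{1/p}\,dr ,
\]
and Gronwall's inequality gives \eqref{FianlEE2}. Note this is strictly simpler than Lemma \ref{L3.1}, since the absence of a fast component removes the coupled $Y$-estimate \eqref{F3.3} and leaves the bound depending on $|x|$ alone.

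For \eqref{IncbarX2} I would first establish the smoothing estimate, the analogue of \eqref{F3.8}, namely
\[
\big(\EE\|\bar X_t\|_\theta^p\big)^{1/p}\leq C_T\,t^{-\theta/2}(1+|x|),\qquad t\in(0,T],
\]
by applying $(-A)^{\theta/2}$ to the mild formula and invoking the second line of \eqref{rr} for the initial term, the bound $\int_0^t(t-r)^{-\theta/2}\,dr<\infty$ (finite as $\theta<2$) together with \eqref{FianlEE2} for the drift term, and \eqref{LAtheta} for the stochastic convolution. I would then decompose, exactly as in \eqref{FFX1},
\[
\bar X_t-\bar X_{t-\delta}=\big(e^{A\delta}\bar X_{t-\delta}-\bar X_{t-\delta}\big)+\int_{t-\delta}^t e^{(t-r)A}\bar F_P(\bar X_r)\,dr+\int_{t-\delta}^t e^{(t-r)A}\,dL_r ,
\]
and bound the three contributions as in \eqref{REGX1}--\eqref{REGX4}: the first by $C\delta^{\theta/2}\|\bar X_{t-\delta}\|_\theta$ via the third line of \eqref{rr} and the smoothing estimate, the second by $C\delta$ via linear growth, and the third by $C_T\delta^{\theta/2}$ via \eqref{LAtheta} and $1-e^{-x}\leq Cx^{\alpha\theta/2}$. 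Combining these with the corresponding splitting of $\bar X_{t(\delta)}-\bar X_{t-\delta}$ reproduces \eqref{FFX2}--\eqref{FFX3} and hence \eqref{IncbarX2}.

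Since every ingredient is already in place, I do not expect a genuine obstacle; the proof is a routine repetition of Section 3 with $F(s/\vare,X_s^\vare,Y_s^\vare)$ replaced by $\bar F_P(\bar X_s)$. The only point deserving care is that \eqref{AV2} no longer contains $\vare$, so the earlier $\vare$-uniformity is vacuous and the bounds depend on $|x|$ rather than $|x|+|y|$, which is precisely the stated form.
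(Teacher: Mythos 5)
Your proposal is correct and follows exactly the route the paper intends: the paper gives no separate proof of this lemma, stating only that it follows ``the same argument as used in Lemmas \ref{L3.1} and \ref{L3.9}'', and your write-up faithfully carries out that transfer, including the needed linear growth and Lipschitz bounds for $\bar F_P$ from \eqref{barc1} and \eqref{LipFP}. No gaps.
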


\vspace{0.2cm}
We are in a position to give a detailed proof of Theorem \ref{MR2}.

\begin{proof}[Proof of Theorem $\ref{MR2}$] Using an elementary result (see e.g. \cite[Lemma 5.1]{SWX2024}): Let $| h|\leq M $ be a  $\tau$-periodic function on $\mathbb{R}$, one has that, for any $ T > 0 $,
$$
\sup_{a \in \mathbb{R}} \left| \frac{1}{T} \int_a^{T + a} h(s) \, ds - \frac{1}{\tau} \int_0^{\tau} h(s) \, ds \right| \leq \frac{2 \tau M}{T}.
$$
This and the definition of $\bar{F}_P$ in \eqref{FP}  yield that
\begin{align}
\frac{1}{T}\left|\int^{t+T}_t [\bar{F}(s,x)-\bar{F}_P(x)]ds\right|\leq \frac{C(1+|x|)}{T}.\label{F5.15}
\end{align}

Note that for any $t\in [0,T]$,
$$
\begin{aligned} \bar{X}_t^{\varepsilon}-\bar{X}_t & =\int_0^t e^{(t-s) A}\left[\bar{F}\left(s/\varepsilon, \bar{X}^{\varepsilon}_s\right)-\bar{F}_P\left(\bar{X}_s\right)\right] d s \\
& =\int_0^t e^{(t-s) A}\left[\bar{F}\left(s / \varepsilon, \bar{X}_s^{\varepsilon}\right)\!-\!\bar{F}\left(s / \varepsilon,\bar X_s\right)\right] d s\!+\!\int_0^t e^{(t-s) A}\left[\bar{F}\left(s / \varepsilon,\bar X_s\right)\!-\!\bar{F}_P\left(\bar{X}_s\right)\right] d s.
\end{aligned}
$$
Then by \eqref{barc1}, we can obtain that for any $p\in [1,\alpha)$,
$$
\begin{aligned}
&\sup_{s \in [0, t]}\mathbb{E}\left|\bar{X}_s^\varepsilon - \bar{X}_s\right|^p \\
\leq &C_{p,T}\int_0^t \mathbb{E}\left|\bar{X}_s^\varepsilon - \bar{X}_s\right|^p ds+ C_{p,T}\sup_{s \in [0, t]}\mathbb{E}\left|\int_0^s e^{(s - r)A}\left[\bar{F}\left(r / \varepsilon, \bar{X}_r\right) - \bar{F}_P\left(\bar{X}_r\right)\right] dr\right|^p.
\end{aligned}
$$
Gronwall's inequality yields that
\begin{align*}
&\sup_{t\in [0,T]}\mathbb{E}\left|\bar{X}_t^\varepsilon - \bar{X}_t\right|^p \\
\leq&  C_{p,T}\sup_{t\in [0,T]}\mathbb{E}\left|\int_0^t e^{(t - s)A}\left[\bar{F}\left(s / \varepsilon, \bar{X}_s\right) - \bar{F}_P\left(\bar{X}_s\right)\right] ds\right|^p\\
\leq&  C_{p,T}\sup_{t\in [0,T]}\EE\left|\int_0^t e^{(t - s(\delta))A}[\bar{F}(s/\varepsilon,\bar{X}_{s(\delta)})-\bar{F}_P(\bar{X}_{s(\delta)})]\,ds\right|^p\\
&+C_{p,T}\EE\left[\int_0^T|\bar{F}(s/\varepsilon,\bar{X}_{s})-\bar{F}(s/\varepsilon,\bar{X}_{s(\delta)})|
+|\bar{F}_P(\bar{X}_{s})-\bar{F}_P(\bar{X}_{s(\delta)})|\,ds\right]^p\\
&+C_{p,T}\sup_{t\in [0,T]}\EE\left[\int_0^t \left|\left(e^{(t - s)A}-e^{(t - s(\delta))A}\right)\left(\bar{F}(s/\varepsilon,\bar{X}_{s})-\bar{F}_P(\bar{X}_{s})\right)\right|\,ds\right]^p\\
=:& J^{\vare}_1(T)+J^{\vare}_2(T)+J^{\vare}_3(T).
\end{align*}
Here, $\delta\in (0,1]$ depends on $\varepsilon$, which will be chosen later.

For $J^{\varepsilon}_1(T)$,  \eref{F5.15} implies that
\begin{align}
J^{\varepsilon}_1(T)\leq& C_T\sup_{t\in [0,T]}\EE\left|\sum^{\lfloor t/\delta\rfloor-1}_{k=0}    \int^{(k+1)\delta}_{k\delta}    e^{(t-k\delta)A}[\bar{F}(s/\varepsilon,\bar{X}_{k\delta})  -  \bar{F}_P(\bar{X}_{k\delta})]  \,ds
\right|^p\nonumber\\
& +  C_T\sup_{t\in [0,T]}\EE\left|\int^{t}_{\lfloor t/\delta\rfloor\delta}  e^{(t-\lfloor t/\delta\rfloor\delta)A}[\bar{F}(s/\varepsilon,\bar{X}_{[t/\delta]\delta})  -  \bar{F}_P(\bar{X}_{\lfloor t/\delta\rfloor\delta})]  \,ds\right|^p\nonumber\\
\leq& C_T\sup_{t\in [0,T]}\EE\left[\sum^{\lfloor t/\delta\rfloor-1}_{k=0}\delta\left|\frac{\varepsilon}{\delta}\int^{\frac{k\delta}{\varepsilon}+\frac{\delta}
{\varepsilon}}_{\frac{k\delta}{\varepsilon}}\left[\bar{F}(s,\bar{X}_{k\delta})-\bar{F}_P(\bar{X}_{k\delta})\right]\,ds\right|\right]^p+C_T(1+|x|^p)\delta^p\nonumber\\
\leq& C_T\left[1+\sup_{t\in [0,T]}\EE|\bar X_t|^p\right]\left(\varepsilon/\delta\right)^p+C_T(1+|x|^p)\delta^p\nonumber\\
\leq& C_T\left(1+|x|^p\right)\left[\left(\varepsilon/\delta\right)^p+\delta^p\right].\label{J1}
\end{align}

For $J^{\varepsilon}_2(T)$, Minkowski's inequality, \eqref{LipFG}, \eqref{LipFP} and \eref{IncbarX2} yield that
\begin{align}
J^{\varepsilon}_2(T)\leq C_T\left[\int_0^T \left[\EE|\bar{X}_{s}-\bar{X}_{s(\delta)}|^p\right]^{1/p}ds\right]^p
\leq C_T (1+|x|^p)\delta^{\theta p/2}.\label{J2}
\end{align}

For $J^{\varepsilon}_3(T)$, by \eqref{rr} and \eqref{FianlEE2}, we have
\begin{align}
J^{\varepsilon}_3(T)\leq& C_T\sup_{t\in [0,T]}\EE\left|\int_0^t \delta^{\theta/2}(t-s)^{-\theta/2}(1+|\bar{X}_{s}|)ds\right|^p\nonumber\\
\leq& C_T\delta^{p\theta/2}\sup_{t\in [0,T]}\left(1+\EE|\bar{X}_{t}|^p\right)\nonumber\\
\leq& C_T\delta^{p\theta/2} (1+|x|^p).\label{J3}
\end{align}

By \eref{J1}-\eref{J3},  one has
\begin{align*}
\sup_{t\in [0,T]}\mathbb{E}\left|\bar{X}_t^\varepsilon - \bar{X}_t\right|^p \leq C_T\left(1+|x|^p\right)\left(\varepsilon/\delta+\delta^{\theta/2}\right)^p.
\end{align*}
Taking $\delta=\vare^{\frac{2}{\theta+2}}$, we get
\begin{align}
\sup_{t\in [0,T]}\mathbb{E}\left|\bar{X}_t^\varepsilon - \bar{X}_t\right|^p \leq C_T\left(1+|x|^p\right)\vare^{\frac{p\theta}{2+\theta}}.\label{F5.19}
\end{align}
Note that $\frac{\theta(p-1)}{\theta(p-1)+2}<\frac{p\theta}{2+\theta}$,  \eqref{F5.19} and \eref{R1} imply the desired result hold.
\end{proof}

\subsection{Proof of Theorem \ref{MR3}}

According to Assumptions \ref{A3} and \ref{A5}, it is easy to check that for any $x_i, y_i \in \mathbb{R}^n$, $i=1,2$,
\begin{align}\label{LiptildeFG}
\begin{split}
&\left|\tilde{F}(x_1,y_1)-\tilde{F}(x_2,y_2)\right| \leq C(|x_1-x_2|+|y_1-y_2|),\\
&\left|\tilde{G}(x_1,y_1)-\tilde{G}(x_2,y_2)\right| \leq C|x_1-x_2|+L_G|y_1-y_2|.
\end{split}
\end{align}
The estimate \eref{LiptildeFG} implies that the equation
\begin{align}
d\tilde Y_{t}=B\tilde Y_{t}dt+\tilde{G}(x,\tilde Y_{t})dt+d Z_t,\quad \tilde Y_{0}=y\label{F5.10}
\end{align}
admits an unique mild solution $\{\tilde Y_{t}^{x,y}\}_{t\geq 0}$. Moreover, using $L_G<\beta_1$, it is easy to prove
$$
\sup_{t\geq 0}\left[\EE|\tilde{Y}^{x,y}_{t}|^p\right]^{1/p}\leq C(1+|x|+|y|),\quad \forall p\in [1,\alpha),
$$
and for any $t \ge 0$, $x_i,y_i\in H$ ($i=1,2$), we have
\begin{align}
\left|\tilde Y_{t}^{x_1,y_1}-\tilde Y_{t}^{x_2,y_2}\right| \le C\exp\left\{-\frac{(\beta_1-L_G) t}{2}\right\}
|y_1-y_2|+C|x_1-x_2|,\label{barErgodicity}
\end{align}

Let $\{\tilde{P}^{x}_{t}\}_{t\geq 0}$ be the transition semigroup of $\tilde Y_{t}^{x,y}$. We have the following statements:

(i) $\{\tilde P^x_t\}_{t\geq 0}$ admits a unique invariant measure $\mu^x$ satisfying
	\begin{align}
		\int_{H}|y|^{p}\mu^x(dy)\leq C_p(1+|x|^p),\quad \forall p\in [1,\alpha).\label{F3.17}
	\end{align}

(ii) For any Lipschitz function $\varphi$ on $H$, we have for any $t>0$,
\begin{align}
\left| \tilde P^{x}_t \varphi(y)-\mu^{x}(\varphi)\right|\leq C\|\varphi\|_{\rm Lip} \exp\left\{-\frac{(\beta_1-L_G) t}{2}\right\}(1+|x|+|y|), \label{ergodicity1}
\end{align}
where $C>0$ and $\|\varphi\|_{\rm Lip}:=\sup_{x\neq y\in H}|\varphi(x)-\varphi(y)|/|x-y|$.

The estimates \eref{barErgodicity} and \eref{ergodicity1} imply that for any $x_1,x_2 \in H$,
\begin{align}
|\mu^{x_1}(\varphi)-\mu^{x_2}(\varphi)|\le C\|\varphi\|_{Lip}|x_1-x_2|.\label{Lipmux}
\end{align}

In this situation, we recall the corresponding averaged coefficient $\bar{F}_A(x)$ defined in \eqref{FA}.
By \eref{LiptildeFG} and \eref{Lipmux}, we have that
\begin{align*}
|\bar F_A(x_{1})-\bar F_A(x_{2})|
=&\left|\int_{H} \tilde F(x_{1},y)\mu^{x_1}(dy)-\int_{H} \tilde F(x_{2},y)\mu^{x_2}(dy)\right|\nonumber\\
\leq&\left|\int_{H} \tilde F(x_{1},y)\mu^{x_1}(dy)-\int_{H} \tilde F(x_{2},y)\mu^{x_1}(dy)\right|\nonumber\\
&+\left|\int_{H} \tilde F(x_{2},y)\mu^{x_1}(dy)-\int_{H} \tilde F(x_{2},y)\mu^{x_2}(dy)\right|\nonumber\\
\le& C|x_1-x_2|.
\end{align*}
Therefore, it is easy to prove the following well-posedness of the solution of Eq.(\ref{AV3}) and a prior estimates of the solution.

\begin{lemma} \label{WFAVE}
For any $x\in H$, Eq.(\ref{AV3}) has an unique mild solution $\bar{X}_t$, i.e.,
$$
X_t = e^{t A} x + \int_0^t e^{(t - r) A} \bar{F}_A( X_r) dr + \int_0^t e^{(t - r) A} d L_r,\quad t\geq 0.
$$
Moreover, for any $p\in [1,\alpha)$ and $T>0$, there exists constant $C_T>0$ such that
\begin{align}
&\sup_{t\in [0, T]}\left(\mathbb{E}|X_{t}|^{p}\right)^{1/p}\leq C_{T}(1+|x|),\label{FianlEE}\\
&\int^{T}_0\left[\mathbb{E}|X_t-X_{t(\delta)}|^p\right]^{1/p}dt\leq C_T\delta^{\frac{\theta }{2}}(1+|x|),\label{IncbarX}
\end{align}
where $\theta$ is the constant in Assumption \ref{A2}.
\end{lemma}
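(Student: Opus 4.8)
The plan is to reduce everything to two properties of the averaged drift $\bar{F}_A$ and then to run the arguments of Lemmas~\ref{L3.1} and~\ref{L3.9} essentially verbatim, the present equation being in fact simpler because it carries no fast component. First I would record that $\bar{F}_A$ is Lipschitz continuous and of linear growth. The Lipschitz bound $|\bar{F}_A(x_1)-\bar{F}_A(x_2)|\leq C|x_1-x_2|$ has already been obtained in the discussion preceding the statement, using \eqref{LiptildeFG} and \eqref{Lipmux}. For the growth, from \eqref{LiptildeFG} one has $|\tilde F(x,y)|\leq |\tilde F(\textbf{0},\textbf{0})|+C(|x|+|y|)\leq C(1+|x|+|y|)$, whence by the moment bound \eqref{F3.17},
\[
|\bar{F}_A(x)|=\left|\int_H \tilde F(x,y)\,\mu^x(dy)\right|\leq C\int_H (1+|x|+|y|)\,\mu^x(dy)\leq C(1+|x|).
\]

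Given these two properties, the existence and uniqueness of a mild solution to \eqref{AV3} follows from a standard Banach fixed-point argument in the space of $\mathscr{F}_t$-adapted processes equipped with the norm $\sup_{t\in[0,T]}(\EE|\cdot|^p)^{1/p}$, exactly as invoked in Remark~\ref{Re1} for the coupled system (cf.\ \cite{PZ2011}); here the stochastic convolution $\int_0^t e^{(t-r)A}\,dL_r$ has finite $p$-th moment for every $p\in(1,\alpha)$ by \eqref{LAtheta}, so the contraction closes on a short interval and is then iterated over $[0,T]$.

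For the a priori bound \eqref{FianlEE} I would follow the proof of Lemma~\ref{L3.1}: apply Minkowski's inequality to the mild formula, use $|e^{tA}x|\leq|x|$ together with the linear growth of $\bar{F}_A$ to estimate the drift term by $C\int_0^t (\EE|X_s|^p)^{1/p}\,ds+C$, control the stochastic convolution by \eqref{LAtheta}, and close by Gronwall's inequality. Because $\bar{F}_A$ depends only on the slow variable, no bootstrap between slow and fast moments is needed, so this step is strictly simpler than Lemma~\ref{L3.1}.

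Finally, for the time-regularity estimate \eqref{IncbarX} I would reproduce the proof of Lemma~\ref{L3.9}. The key ingredient is the smoothing bound $(\EE\|X_t\|_\theta^p)^{1/p}\leq C_T t^{-\theta/2}(1+|x|)$, obtained as in \eqref{F3.8} from the estimate for $(-A)^{\theta/2}e^{tA}$ in \eqref{rr} and the $\theta$-norm convolution bound \eqref{LAtheta}. Splitting $X_t-X_{t-\delta}$ into the semigroup increment $(e^{A\delta}-I)X_{t-\delta}$, the drift increment, and the noise increment, each piece contributes $O(\delta^{\theta/2})$ after integrating in $t$, the noise term again using $1-e^{-x}\leq Cx^{\alpha\theta/2}$; the passage from $X_{t-\delta}$ to $X_{t(\delta)}$ is handled identically. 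Since every step mirrors Lemmas~\ref{L3.1} and~\ref{L3.9} with the simpler drift $\bar{F}_A$, there is no genuine obstacle; the only point requiring real care is the verification of the linear growth of $\bar{F}_A$ above, after which the whole argument goes through unchanged.
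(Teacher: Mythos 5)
Your proposal is correct and follows exactly the route the paper intends: the paper states this lemma without proof, asserting it is "easy to prove" once the Lipschitz continuity of $\bar{F}_A$ is established in the preceding discussion, with the moment and time-regularity estimates obtained by repeating the arguments of Lemmas \ref{L3.1} and \ref{L3.9}. Your explicit verification of the linear growth of $\bar{F}_A$ via \eqref{LiptildeFG} and \eqref{F3.17} is the one detail the paper leaves implicit, and you supply it correctly.
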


Denote $\{\tilde Y_{t}^{s,x,y}\}_{t\geq s}$ is the solution of the equation
\begin{align}
d\tilde Y_{t}^{s,x,y}=B\tilde Y_{t}^{s,x,y}dt+\tilde{G}(x,\tilde Y_{t}^{s,x,y})dt+d Z_t,\quad \tilde Y_{s}^{s,x,y}=y.\label{SFrE}
\end{align}

\begin{lemma}\label{lem5.3}
For any $T>0$, we have
\begin{align*}
\sup_{t\geq 0}\frac{1}{T}\int^{t+T}_{t}\EE|Y_r^{t,x,y}-\tilde Y_{r}^{t,x,y}|dr
\leq C(1+|x|+|y|)\sup_{t\geq 0}\frac{1}{T}\int^{t+T}_{t}\left[\int^r_0 e^{-(r-s)\beta_1}\phi_2(s)ds\right]dr.
\end{align*}
\end{lemma}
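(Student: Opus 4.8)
The plan is to exploit the fact that the frozen equation \eqref{MildFY} and the limiting equation \eqref{SFrE} are driven by the \emph{same} noise $Z$, so that the difference process $D_r := Y_r^{t,x,y}-\tilde Y_r^{t,x,y}$ carries no stochastic integral and can be estimated pathwise. Subtracting the two mild formulations (both started at time $t$ from $y$) cancels the free term $e^{(r-t)B}y$ and the stochastic convolution, leaving, for $r\geq t$,
$$D_r = \int_t^r e^{(r-u)B}\left[G(u,x,Y_u^{t,x,y})-\tilde G(x,\tilde Y_u^{t,x,y})\right]du,$$
whence, using $|e^{(r-u)B}|\leq e^{-\beta_1(r-u)}$,
$$|D_r| \leq \int_t^r e^{-\beta_1(r-u)}\left|G(u,x,Y_u^{t,x,y})-\tilde G(x,\tilde Y_u^{t,x,y})\right|du.$$

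First I would split the drift difference by inserting $\tilde G(x,Y_u^{t,x,y})$:
$$\left|G(u,x,Y_u^{t,x,y})-\tilde G(x,\tilde Y_u^{t,x,y})\right| \leq \left|G(u,x,Y_u^{t,x,y})-\tilde G(x,Y_u^{t,x,y})\right| + \left|\tilde G(x,Y_u^{t,x,y})-\tilde G(x,\tilde Y_u^{t,x,y})\right|.$$
The first term is controlled by the asymptotic convergence bound \eqref{Cf}, giving $\phi_2(u)(1+|x|+|Y_u^{t,x,y}|)$, and the second by the Lipschitz estimate \eqref{LiptildeFG}, giving $L_G|D_u|$. Taking expectations and invoking the moment bound \eqref{EFY} to replace $\EE|Y_u^{t,x,y}|$ by $C(1+|x|+|y|)$, I obtain
$$\EE|D_r| \leq C(1+|x|+|y|)\int_t^r e^{-\beta_1(r-u)}\phi_2(u)\,du + L_G\int_t^r e^{-\beta_1(r-u)}\EE|D_u|\,du.$$

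Next I would integrate in $r$ over $[t,t+T]$ and apply Fubini's theorem to the last term, so that $\int_t^{t+T}\!\int_t^r e^{-\beta_1(r-u)}\EE|D_u|\,du\,dr \leq \frac{1}{\beta_1}\int_t^{t+T}\EE|D_u|\,du$. Since $\EE|D_r|$ is finite (it is dominated by $\EE|Y_r^{t,x,y}|+\EE|\tilde Y_r^{t,x,y}|$, both bounded via estimates analogous to \eqref{EFY}) and $L_G<\beta_1$, the resulting term $\frac{L_G}{\beta_1}\int_t^{t+T}\EE|D_r|\,dr$ can be absorbed into the left-hand side, producing a finite factor $(1-L_G/\beta_1)^{-1}$. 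Finally, because $\phi_2\geq 0$ lets me enlarge $\int_t^r$ to $\int_0^r$, dividing by $T$ and taking $\sup_{t\geq 0}$ yields exactly the claimed bound; this absorption trick is the same one already used for \eqref{EFY} and \eqref{DEY}, so no separate Gronwall iteration is required.

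The one point that must be respected throughout is to keep every estimate at the level of \emph{first} moments. Because the $\alpha$-stable driving noise has no finite second moment, one cannot run the energy method on $|D_r|^2$: doing so would force both $\phi_2^2$ and $\EE|Y_u|^2$ to appear and, moreover, would produce the wrong power of $\phi_2$. Working directly with the mild formulation in $L^1$ and relying on the dissipativity gap $\beta_1-L_G>0$ for the absorption is therefore the essential mechanism, and it is precisely what makes the first-power-of-$\phi_2$ bound in the statement attainable.
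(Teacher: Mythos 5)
Your proposal is correct and follows essentially the same route as the paper's proof: subtract the two mild formulations so the noise cancels, insert $\tilde G(x,Y_u^{t,x,y})$ to split the drift difference into a $\phi_2$ term (via \eqref{Cf}) and an $L_G|D_u|$ term, integrate over $[t,t+T]$, apply Fubini, and absorb the $\frac{L_G}{\beta_1}$ contribution using $L_G<\beta_1$. The added remarks on finiteness of $\EE|D_r|$ and on staying at the level of first moments are sound but do not alter the argument.
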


\begin{proof}
For fixed $t\geq 0$, denote $V_r=:Y_r^{t,x,y}-\tilde Y_{r}^{t,x,y}$, $r\geq t$. Then it is easy to obtain
\begin{align*}
\EE|V_r|=&\EE\left|\int^r_t e^{(r-s)B}\left[G(s, x, Y_s^{t, x, y})-\tilde G(s, x, Y_s^{t, x, y})\right]ds\right|\\
\leq &\EE\left|\int^r_t e^{(r-s)B}\left[G(s, x, Y_s^{t,x, y})- \tilde G(s, x, Y_s^{t,x, y})\right]ds\right|\\
&+\EE\left|\int^r_t e^{(r-s)B}\left[\tilde G(s, x, Y_s^{t, x, y})- \tilde G(s, x, \tilde Y_s^{t,x, y})\right]ds\right|\\
\leq &\int^r_t e^{-(r-s)\beta_1}\phi_2(s)\EE(1+|x|+|Y_s^{t,x, y}|)ds+\int^r_t e^{-(r-s)\beta_1}L_G\EE|V_s|ds.
\end{align*}
 Fubini's theorem yields
\begin{align*}
\int^{t+T}_{t}\EE|V_r|dr
\leq &L_G\int^{t+T}_t\!\!\int^r_t \!\!e^{-(r-s)\beta_1}\EE|V_s|dsdr\!+\!\int^{t+T}_{t}\!\!\int^r_t \!\!e^{-(r-s)\beta_1}\phi_2(s)\EE(1\!+\!|x|\!+\!|Y_s^{t,x, y}|)dsdr\\
\leq &L_G\int^{t+T}_{t}\!\!\int^{t+T}_s \!\!e^{-(r-s)\beta_1}\EE|V_s|drds\!+\!C(1\!+\!|x|\!+\!|y|)\int^{t+T}_{t}\!\!\int^r_t \!\!e^{-(r-s)\beta_1}\phi_2(s)dsdr\\
\leq &\frac{L_G}{\beta_1}\int^{t+T}_{t}\EE|V_s|ds\!+\!C(1\!+\!|x|\!+\!|y|)\int^{t+T}_{t}\!\!\int^r_t \!\!e^{-(r-s)\beta_1}\phi_2(s)dsdr.
\end{align*}
By the condition $L_G<\beta_1$, we have
\begin{align*}
\int^{t+T}_{t}\EE|V_r|dr
\leq C(1+|x|+|y|)\int^{t+T}_{t}\!\!\int^r_0\!\! e^{-(r-s)\beta_1}\phi_2(s)dsdr,
\end{align*}
which implies the desired result. The proof is complete.
\end{proof}

\begin{remark}\label{R5.4}
Refer to \cite[Remark 4.2]{SWX2024}, the definition of $\phi_2$ implies $\lim_{r\rightarrow +\infty}\int^r_0 e^{-(r-s)\beta_1}\phi_2(s)ds=0$. Then using the fact that $\lim_{T\to +\infty}\sup_{t\geq0}\frac{1}T\int_{t}^{t + T}\phi(s)ds=0$, for any local integrable function $\phi\geq 0$ satisfying $\phi(T)\to 0$ as $T\to\infty$, thus it follows
$$
\lim_{T\to +\infty}\sup_{t\geq 0}\frac{1}{T}\int^{t+T}_{t}\!\!\int^r_0 \!\!e^{-(r-s)\beta_1}\phi_2(s)dsdr=0.
$$
\end{remark}

\vspace{0.2cm}
We are in a position to give a detailed proof of Theorem \ref{MR3}.

\begin{proof}[Proof of Theorem $\ref{MR3}$]
Using \eref{Wergodicity}, \eqref{F3.17}, \eref{ergodicity1}, Lemma \ref{lem5.3} and Assumption \ref{A5}, we have
\begin{align}
&\frac{1}{T}\Big|\int^{t+T}_t [\bar{F}(s,x)-\bar{F}_A(x)]ds\Big|\nonumber\\
=& \frac{1}{T}\Big|\int^{t+T}_t \left[\int_{H}F(s,x,y)\mu^x_s(dy)-\int_{H}\tilde F(x,y)\mu^x(dy)\right]ds\Big|\nonumber\\
\le &\frac{1}{T}\Big|\int^{t+T}_t \left[\int_{H}F(s,x,y)\mu^x_s(dy)-\int_{H} F(s,x,y)\mu^x(dy)\right]ds\Big|\nonumber\\
&+\frac{1}{T}\Big|\int^{t+T}_t \left[\int_{H}F(s,x,y)\mu^x(dy)-\int_{H}\tilde F(x,y)\mu^x(dy)\right]ds\Big|\nonumber\\
\le &\frac{1}{T}\Big|\int^{t+T}_t \left[\int_{H}F(s,x,y)\mu^x_s(dy)-\EE F(s,x,Y^{t,x,\textbf{0}}_s)\right]ds\Big|\nonumber\\
&+\frac{1}{T}\Big|\int^{t+T}_t \left[\EE F(s,x,Y^{t,x,\textbf{0}}_s)-\EE F(s,x,\tilde Y^{t,x,\textbf{0}}_s)\right]ds\Big|\nonumber\\
&+\frac{1}{T}\Big|\int^{t+T}_t \left[\EE F(s,x,\tilde Y^{t,x,\textbf{0}}_s)-\int_{H} F(s,x,y)\mu^x(dy)\right]ds\Big|\nonumber\\
&+\frac{1}{T}\Big|\int^{t+T}_t \left[\int_{H}F(s,x,y)\mu^x(dy)-\int_{H}\tilde F(x,y)\mu^x(dy)\right]ds\Big|\nonumber\\
\leq& C(1+|x|)\frac{1}{T}\int^{t+T}_t \left[e^{-(\beta_1-L_G) (s-t)/2}+\int^s_0 e^{-(s-u)\beta_1}\phi_2(u)du\right]ds\nonumber\\
&+C\int_{H} (1+|x|+|y|)\mu^x(dy)\phi_1(T)\nonumber\\
\leq&C(1+|x|)(1/T+\phi_1(T)+\tilde\phi_2(T)),\label{F4.19}
\end{align}
where $\tilde\phi_2(T)$ is defined in \eqref{phi2}, which satisfies $\lim_{T\to \infty}\tilde\phi_2(T)=0$ by Remark \ref{R5.4}.

Using \eqref{F4.19} and following the same argument as used in the proof of Theorem \ref{MR2}, we can easily obtain
\begin{align*}
\sup_{t\in [0,T]}\mathbb{E}\left|\bar{X}_t^\varepsilon - X_t\right|^p \leq C_T\left(1+|x|^p\right)\left[\varepsilon/\delta+\phi_1(\delta/\varepsilon)+\tilde\phi_2(\delta/\varepsilon)+\delta^{\theta/2}\right]^p.
\end{align*}
Taking $\delta=\varepsilon^{2/(\theta+2)}$, we get
\begin{align*}
\sup_{t\in [0,T]}\EE|\bar{X}_t^\varepsilon-X_{t}|^{p}\leq C_T\left(1+|x|^p\right)\left[\vare^{\theta/(\theta+2)}
+\phi_1(\vare^{-\theta/(\theta+2)})+\tilde\phi_2(\vare^{-\theta/(\theta+2)})\right]^p.
\end{align*}
Therefore, the desired assertion immediately follows from the above statement and \eref{R1}. The proof is complete.

\end{proof}

\section{Application to examples}
In this section, we will present a concrete example to illustrate our results. Considering the following non-linear stochastic heat equation on $D=[0,\pi]$ with Dirichlet boundary conditions:
\begin{equation}\left\{\begin{array}{l}\label{Ex}
\displaystyle
dX^{\varepsilon}(t,\xi)=\left[\Delta X^{\varepsilon}(t,\xi)
+f(t/\vare,X^{\varepsilon}(t,\xi), Y^{\varepsilon}(t,\xi))\right]dt+dL(t,\xi),\vspace{2mm}\\
\displaystyle dY^{\varepsilon}(t,\xi)=\frac{1}{\varepsilon}\left[\Delta Y^{\varepsilon}(t,\xi)+g(t/\vare,X^{\varepsilon}(t,\xi), Y^{\varepsilon}(t,\xi))\right]dt
+\frac{1}{\varepsilon^{1/\alpha}}dZ(t,\xi),\vspace{2mm}\\
X^{\vare}(t,\xi)=Y^{\vare}(t,\xi)=0, \quad t> 0,\quad \xi\in\partial D,\vspace{2mm}\\
X^{\vare}(0,\xi)=x(\xi),Y^{\vare}(0,\xi)=y(\xi)\quad \xi\in D, x,y\in H,\end{array}\right.
\end{equation}
where $\partial D$ the boundary of $D$. $L$ and $Z$ are two mutually independent cylindrical $\alpha$-stable processes defined on probability space $(\Omega, \mathscr{F},\mathbb{P})$ with $\alpha\in(1,2)$ , that is
$$
L_t=\sum^{\infty}_{k=1}L^{k}_{t}e_k,\quad Z_t=\sum^{\infty}_{k=1}Z^{k}_{t}e_k,\quad t\geq 0,
$$
where $\{L^k\}_{k\ge 1}$ and $\{Z^k\}_{k\ge 1}$ are two sequences of independent one dimensional symmetric $\alpha$-stable processes.

Let $\Delta$ be Laplacian operator  on $H:=L^2(D)$ with Dirichlet boundary assumptions. Taking
$$Ax=Bx=\Delta x,\quad\quad x\in \mathscr{D}(A)=\mathscr{D}(B):=H^{2}(D) \cap H^1_0(D),$$
then $A$ and $B$ are self-adjoint operators and possesses a complete orthonormal system of eigenfunctions, namely
$$e_k(\xi)=(\sqrt{2/\pi})\sin(k\xi),\quad\xi\in D, k\ge 1.$$
Moreover, $Ae_k=Be_k=-k^2 e_k$. Thus Assumption \ref{A1} holds for $\lambda_k=\beta_k=k^2$.

Set $\rho_k=\gamma_k=1$, $k\ge1$.  It is easy to see that
\begin{align*}
\sum^{\infty}_{k=1}\frac{1}{\lambda^{1-\alpha\theta/2}_k}<\infty,\quad \sum^{\infty}_{k=1}\frac{1}{\beta_k}<\infty
\end{align*}
if and only if $0<\theta<1/\alpha<1$.  Assumption \ref{A2} holds for any $\theta\in (0,1/\alpha)$.

The coefficients $F(t,x,y)(\xi):=f(t,x(\xi),y(\xi))$ and $G(t,x,y)(\xi):=g(t,x(\xi),y(\xi))$, $x,y\in L^2(D)$, are defined to be the Nemytskii operators associated with functions $f$ on $[0,+\infty)\times \RR^2$ and $g$ on $\RR\times \RR^2$ which are measurable and  there exist constants $C>0, L_{g}<1$ such that for any $x_i,y_i\in \RR$, $i=1,2$,
\begin{align*}
&\sup_{t\geq 0}|f(t,x_1, y_1)-f(t,x_2, y_2)|\leq C\left(|x_1-x_2| + |y_1-y_2|\right),\\
&\sup_{t\in\RR}|g(t,x_1, y_1)-g(t,x_2, y_2)|\leq C|x_1-x_2| + L_g|y_1-y_2|,\\
&\sup_{t\geq 0}|f(t,x_1,y_1)|\leq C(1+|x_1|+|y_1|),\quad \sup_{t\in\RR}|g(t,x_1,y_1)|\leq C\left(1+|x_1|\right)+L_g|y_1|.
\end{align*}
Then it is easy to check that Assumption \ref{A3} holds with $L_G=L_g<\beta_1$.

Now, we can rewrite stochastic system \eqref{Ex} as follows
\begin{equation*}
 \left\{\begin{array}{l}d X_t^{\varepsilon}=\left[A X_t^{\varepsilon}+F\left(t / \varepsilon, X_t^{\varepsilon}, Y_t^{\varepsilon}\right) d t+d L_t,\quad X_0^{\varepsilon}=x \in H,\right. \\
 d Y_t^\varepsilon=\frac{1}{\varepsilon}\left[BY_t^{\varepsilon}+G\left(t / \varepsilon, X_t^{\varepsilon}, Y_t^{\varepsilon}\right)\right] d t+\frac{1}{\varepsilon^{1 / \alpha}} d Z_t, \quad Y_0^{\varepsilon}=y \in H,\end{array}\right.
\end{equation*}
Theorem \ref{MR1} implies that for any $ x, y \in H $, $p\in (1,\alpha)$ and $ T > 0 $, we have
\begin{align*}
\sup _{t \in[0, T]}\mathbb{E}\left|X_t^{\varepsilon}-\bar{X}_t^{\varepsilon}\right|^p\leq C_{p,T}(1+|x|^p+|y|^p)\varepsilon^{\frac{\theta(p-1)}{\theta(p-1)+2}},
\end{align*}
where $\bar{X}_t^{\varepsilon}$ is the solution of the corresponding averaged equation.

\textbf{Time periodic case:} If we assume there exists $\tau>0$ such that
$$f(t,x_1,y_1):=f(t+\tau,x_1,y_1),\quad t\in \RR_{+},$$
 $$g(t,x_1,y_1):=g(t+\tau,x_1,y_1),\quad t\in \RR,$$
Then $F(\cdot,x,y)$ and $G(\cdot,x,y)$ are $\tau$-periodic obviously and Assumption \ref{A4} holds.
Therefore, for any $ x, y \in H $, $p\in (1,\alpha)$ and $ T > 0 $, Theorem \ref{MR2} yields that
\begin{align*}
\sup _{t \in[0, T]}\mathbb{E}\left|X_t^{\varepsilon}-\bar X_t\right|^p\leq C_T(1+|x|^p+|y|^p)\varepsilon^{\frac{\theta(p-1)}{\theta(p-1)+2}}.
\end{align*}
where $\bar{X}_t$ is the solution of the corresponding averaged equation.

\textbf{Asymptotic convergence case:} If we further assume there exists $\tilde f$ and $\tilde g$ on $\RR^2$ such that
\begin{align*}
\sup_{t\geq 0}\frac{1}{T}\left|\int^{t+T}_t \left[f(s,\xi_1,\xi_2)-\tilde{f}(\xi_1,\xi_2)\right]ds\right|\leq \phi_1(T)(1+|\xi_1|+|\xi_2|)
\end{align*}
and
 $$|g(T,\xi_1,\xi_2)-\tilde g(\xi_1,\xi_2)|\leq \phi_2(T)(1+|\xi_1|+|\xi_2|),$$
where $\phi_i(T)\to 0$ as $T\to \infty$, $i=1,2$.  For any $x,y\in L^2(D)$, define
\begin{align*}
\tilde{F}(x,y)(\xi):=\tilde f(x(\xi),y(\xi)),~~~
\tilde G(x,y)(\xi)=:\tilde g(x(\xi),y(\xi)).
\end{align*}
Thus it follows
\begin{align*}
\sup_{t\geq 0}\frac{1}{T}\left|\int^{t+T}_t \left[F(s,x,y)-\tilde{F}(x,y)\right]ds\right|\leq \phi_1(T)(1+|x|+|y|)
\end{align*}
and
\begin{align*}
|G(T,x,y)-\tilde G(x,y)|\leq \phi_2(T)(1+|x|+|y|).
\end{align*}
Thus Assumption \ref{A5} holds.  For any $ x, y \in H $, $p\in (1,\alpha)$ and $ T > 0 $, Theorem \ref{MR3} implies that
\begin{align*}
\sup _{t \in[0, T]}\mathbb{E}\left|X_t^\varepsilon - X_t\right|^p
\leq  C_{p,T}(1 + |x|^p+ |y|^p)\left[\varepsilon^{\frac{\theta(p-1)}{\theta(p-1)+2}}
+\left(\phi_1(\vare^{-\theta/(\theta+2)})+\tilde\phi_2(\vare^{-\theta/(\theta+2)})\right)^p\right],
\end{align*}
where $X_t$ is the solution of the corresponding averaged equation and $\tilde{\phi}_2$ is defined in \eqref{phi2}.

\vspace{0.3cm}
\textbf{Acknowledgment}. The research of Xiaobin Sun is supported by the NSF of China (Nos.
12271219, 12090010 and 12090011). The research of Yingchao Xie is supported by the NSF of China  (No 12471139) and the Priority Academic Program Development of Jiangsu Higher Education Institutions.

\end{document}